\theoremstyle{plain}
\newtheorem*{acknowledgements}{Acknowledgements}
\newtheorem{assumption}{Assumption}
\newcommand{\ligne}{\vspace{1\baselineskip}}
\newcommand{\ph}{\phantomsection}
\newcommand{\R}{  \mathbb{R}   }
\newcommand{\X}{  \mathcal{X}  }
\newcommand{\eps}{\varepsilon}
\newcommand{\e}{  \text{e}   }
\newcommand{\wt}{  \widetilde   }
\newcommand{\N}{  \mathbb{N}   }
\newcommand{\un}{\underline{\mathcal{N}}}
\newcommand{\D}{  \mathcal{D}   }
\newcommand{\dis}{\displaystyle}
\newcommand{\h}{  h }
\newcommand{\om}{  \omega   }
\renewcommand{\a}{  \alpha   }
\renewcommand{\b}{  \beta   }
\newcommand{\s}{  \sigma   }
\renewcommand{\phi}{  \varphi  }
\renewcommand{\L}{  \mathcal{L}   }
\newcommand{\<}{  \langle   }
\renewcommand{\>}{  \rangle   }
\renewcommand{\l}{  \ell  }
\renewcommand{\S}{  \mathbb{S}  }
\numberwithin{equation}{section}
 \author{ Beno\^it Gr\'ebert, Tiphaine J\'ez\'equel and Laurent Thomann}
\address{Laboratoire de Math\'ematiques J. Leray, Universit\'e de Nantes, UMR CNRS 6629\\
2, rue de la Houssini\`ere \\
44322 Nantes Cedex 03, France.}
\email{benoit.grebert@univ-nantes.fr,tiphaine.jezequel@univ-nantes.fr,laurent.thomann@univ-nantes.fr}
\title[Dynamics of Klein-Gordon]
{Dynamics of Klein-Gordon on a compact surface near a homoclinic orbit}
\begin{document}
\frontmatter

\begin{abstract}
We consider the   Klein-Gordon equation  (KG) on a Riemannian surface $M$
$$  \partial^{2}_t u-\Delta u-m^{2}u+u^{2p+1} =0,\quad p\in \N^{*},\quad  (t,x)\in \R\times M,$$
which  is globally well-posed in the energy space. This equation has a homoclinic orbit to the origin, and in this paper we study the dynamics close to it.  Using a strategy from Groves-Schneider,  we get the existence of a large family of heteroclinic connections to the center manifold that are close to the homoclinic orbit during all times. We point out that the solutions we construct are not small.
    \end{abstract}

\keywords{Klein-Gordon equation, wave equation, homoclinic orbit, center manifold.}
\altkeywords{  Equation de Klein-Gordon, \'equation des ondes, orbite homocline, vari\'et\'e centrale.}\frontmatter
\subjclass{ 37K45, 35Q55, 35Bxx}
\thanks{
\noindent B.G. was supported in part by the  grant ANR-10-BLAN-DynPDE.\\
 B.G. and L.T.  were supported in part by the  grant ANR-10-JCJC 0109.}

\maketitle

\tableofcontents

\section{Introduction, statement of the main results}
\subsection{General introduction} Denote by  $M$   a compact Riemannian manifold without boundary of dimension 1,2 or 3 and denote by $\Delta=\Delta_{M}$ the Laplace-Beltrami operator on $M$. In this paper we are concerned with the following nonlinear Klein-Gordon  (KG) equation
\begin{equation}\label{kg} \tag{KG}
\left\{
\begin{aligned}
& \partial^{2}_t u-\Delta u-m^{2}u+u^{2p+1} =0,\quad 
(t,x)\in\R\times M,\\
&u(0,x)= u_{0}(x),\quad \partial_{t}u(0,x)= u_{1}(x),
\end{aligned}
\right.
\end{equation} 
where $p\geq 1$ is an integer, and $(u_{0},u_{1})\in H^{1}(M)\times L^{2}(M)$ are real-valued.\ligne

 It is well-known that there exists a Hilbert basis of $L^{2}(M)$ composed with eigenfunctions $(e_{n})_{n\geq 0}$ of~$\Delta$. Moreover (see {\it e.g.} \cite{Helffer}), there exists a sequence $0=\lambda_{0}<\lambda_{1}\leq \dots \leq \lambda_{n}\leq \dots$ so that  
 \begin{equation*}
 -\Delta e_{n}=\lambda^{2}_{n}e_{n}, \quad n\geq 0.
 \end{equation*}
In the sequel, we define the scalar product on $L^{2}(M)$   by $\dis\<f,g\>=\frac1{{\rm Vol}\, M}\int_{M}fg$, where ${\rm Vol}\, M$ denotes  the volume of $M$, we assume that $\|e_{n}\|_{L^{2}}=1$ and we set $e_{0}=1$.   \ligne

We make following  assumptions 
\begin{assumption}\ph\label{assumption0}
The parameter  $m$ satisfies $\dis 0<m<\lambda_{1}.$
\end{assumption}

\begin{assumption}\ph\label{assumption}
The manifold $M$ and the integer $p$ satisfy either:\\[5pt]
\indent $\bullet$ $M$ is any compact manifold without boundary of dimension 1 or 2 and $p\geq 1$\\[5pt]
\indent $\bullet$ $M$ is any compact manifold without boundary of dimension 3 and $p= 1$.\\[5pt]
Moreover, up to a rescaling, we can assume that ${\rm Vol}\, M=1$.
\end{assumption}
~

The stationary solutions of (KG)  (solutions which only depend on the space variable)  are exactly the constants $u=0$, $u=m^{1/p}$ and $u=-m^{1/p}$ (see Lemma \ref{lem.equi}). The origin is an equilibrium with an unstable direction. In fact, the eigenvalues of $-\Delta-m^{2}$ are the $(\lambda_{k}^{2}-m^{2})_{k\in \N}$. Since~$0<m<\lambda_{1}$,  the case $k=0$ only,  gives the hyperbolic directions, corresponding to the solution $\exp(mt)$ for $t>0$ (resp. $\exp(-mt)$ for $t<0$). It turns out that \eqref{kg} admits a homoclinic orbit  to the origin. Indeed, the following $x-$independent function is a solution to \eqref{kg}
\begin{equation*}
\a(t)=\frac{m^{1/p}(p+1)^{1/(2p)}}{\big(\cosh(pmt)\big)^{1/p}},
\end{equation*}
and in the sequel we will refer to 
$$h(t)=(\a(t),\beta(t)),\quad \text{with}\quad \dot{\beta}(t)=\a(t),$$
as the time homoclinic solution (to the origin) of \eqref{kg}, see Section \ref{Sect.14}.  In this work we aim to study the dynamics of \eqref{kg} near this particular trajectory, and we will show the existence of solutions which remain close for all times to this exact temporal solution. 
\ligne

In the case $M=\S^{1}$ we can precise the dynamics around the equilibrium $u=m^{1/p}$ (the study near $u=-m^{1/p}$ is similar since the non-linearity is odd). We linearize the equation near this point ($u=m^{1/p}+v$) and we are lead to study the spectrum of $(-\partial^{2}_{x}+2pm^{2})v$. This   operator  on $\S^1$   is self adjoint and has pure point spectrum $j^2+2pm,\ j\in\N$. Using the Birkhoff normal form theory,    Bambusi~\cite{Bam04} (see also \cite{BG06}) has shown  that, for a generic choice of $m$ (in order to avoid resonances between the frequencies $\sqrt{j^2+2pm},\ j\in \N$), the solutions of \eqref{kg} with initial datum of the form $m^{1/p}+v_0$ with~$v_0$ small enough,  remain close to the equilibrium point $m^{1/p}$ for very long time. See also the work \cite{Delort09} of J.-M. Delort for quasi-linear equations.

Moreover, using  KAM theory, C. E.  Wayne \cite{Wayne} and J. P\"oschel \cite{Posch} have proved that, for a generic choice of $m$, there exist many quasi-periodic solutions near $w=m^{1/p}$. See also the recent work of Berti-Biasco-Procesi \cite{BBP1} for derivative wave equations. Observe that quasi-periodic solutions can be constructed even if the equilibrium has a finite number of hyperbolic directions (see \cite{BertiBolle}).

In higher dimension and for a general manifold, few is known. In the case $M$ is a Zoll manifold, Bambusi-Delort-Gr\'ebert-Sjeftel \cite{BDGS} have developed a Birkhoff normal form theory for \eqref{kg} near an elliptic equilibrium. Up to now, there is no KAM-type result for \eqref{kg} in dimension greater than two.

In this work, we describe some possible behaviours of \eqref{kg} near the homoclinic orbit. We state the existence of solutions that travel from a neighbourhood of the origin to turn around the equilibrium~$m^{1/p}$, close to the homoclinic connection to 0 (see Theorems \ref{thm1} and \ref{coro.thm}). We stress out that these results do not require non resonance conditions. 

The existence of homoclinic or heteroclinic connections to periodic or quasi-periodic solutions is a question of interest: for example, in the case of parabolic PDE's on a one dimensional bounded domain, homoclinic orbits to equilibrium points can exist but there cannot exist any homoclinic connections to periodic orbits (see \cite{JolyRaugel}). In some other cases some homoclinic connections to small periodic orbits exist while homoclinic orbits to 0 do not exist or remain an open question (see for instance \cite{LombardiArticle,LombardiLivre,TheseTiphaine}). More generally, such homoclinic connections to the center manifold often appear when a homoclinic orbit does not persist after a perturbation of the system (see for instance \cite{ShaZeng}).  We refer also to \cite{BertiCarminati}, where the existence of homoclinic  solutions to a beam equation is studied. 

In this paper, we get the existence of a large family of homoclinic connections to the center manifold (more precisely heteroclinic connections to some solutions lying in the center manifold). The proof is based on a perturbative method which is classical (see for instance \cite{IoossPeroueme,LombardiArticle}) for finite dimensional reversible systems (systems which anticommute with a symmetry $S$): the key idea is that if a solution hits the reversibility plane $\{u \mid
u=Su\}$, then the latter solution is reversible. This method requires more computations in the infinite dimensional cases. It was already performed in a situation  close to ours by M. Groves and G. Schneider \cite{GS1,GS2,GS3}: but in their case, they work in the neighbourhood of a bifurcation and get small homoclinic solutions, while in our case the size of the solutions is of order~1 and this requires some additional work on the linearised system, which is one of our main contributions in this paper.

Finally we also mention the recent book of Nakanishi \& Schlag \cite{NS} on invariant manifolds in the context of dispersive Hamiltonian PDEs.

\subsection{A first motivation for studying $\eqref{kg}$}   
Recall that the usual non linear Klein-Gordon equation reads
\begin{equation}\label{NLW.0}
\partial^{2}_{t}w-\Delta w+w-f(w)=0,\quad (t,x)\in \R\times M,
\end{equation}
where $f$ is a non linear function. Let us show that if there exists a nonzero equilibrium, the equation near the smallest equilibrium is of the form \eqref{kg} (but with a general non linearity). 
The equilibrium of \eqref{NLW.0} are the real constants $w_0$ satisfying $f(w_0)-w_0=0$. Observe that if one performs the change of coordinates $u=w-w_0$, then there exists a non linear function $g$ such that $u$ satisfies 
\begin{equation*}
\partial^{2}_{t}u-\Delta u-(f'(w_0)-1)u-g(u)=0.
\end{equation*}
Given that $f$ is non linear, $f'(0)=0$ and $f'(0)-1<0$. Thus, if $f$ is $\mathcal{C}^1$ and if there exists an equilibrium $w_0\neq 0$, then by the intermediate value theorem, we get that the smallest equilibrium $w_0$ satisfies $f'(w_0)-1\geq 0$. This means that near the smallest non zero equilibrium of \eqref{NLW.0}, the equation is of the form
$$\partial^{2}_t u-\Delta u-m^{2}u-g(u)=0.$$
For instance in this paper we consider a nonlinearity $g(u)=u^{2p+1}$, $u=m^{1/p}$ is a (stable) equilibrium and the change of variable $u=m^{1/p}+w$ leads to
$$\partial^{2}_{t}w-\Delta w+2m^2 w-f(w)=0,\quad (t,x)\in \R\times M$$
with a nonlinearity $\dis f(w)=(w+m^{\frac{1}{p}})^{2p+1}-m^{\frac{2p+1}{p}}-(2p+1)m^2w$. 
\subsection{Another motivation of the problem} Here we are inspired from the works \cite{GS1,GS2,GS3} of M. Groves and G. Schneider.
Consider the non linear wave equation on the real line 
\begin{equation}\label{NLW}
\partial^{2}_{t}w-\partial^{2}_{x}w+w-w^{2p+1}=0,\quad (t,x)\in \R\times \R.
\end{equation}
One can be interested in the construction of  ``modulated pulse solutions '' for \eqref{NLW} which are solutions of the form \begin{equation}\label{lorentz}
w(t,x)=u\big(\frac{x-\beta t}{c_{\beta}},\frac{t-\beta x}{c_{\beta}}\big),
\end{equation}
where $u(s,y)$ is a $2\pi-$periodic function in $y$, and where $\beta \in (0,1)$ and $c_{\b}>0$. In the particular case  $c_{\b}=(1-\b^{2})^{1/2}$, \eqref{lorentz} is the Lorentz transform which preserves \eqref{NLW} and in general we get
\begin{equation}\label{kg*}
\partial^{2}_{s}u-\partial^{2}_{y}u-\frac{c^{2}_{\b}}{1-\b^{2}}\big(u-u^{2p+1}\big)=0 ,\quad (s,y)\in \R\times \S^{1}.
\end{equation}
Now let $m\in (0,1)$ and $\beta \in (0,1)$, then we can choose $c_{\b}>0$ so that $\frac{c^{2}_{\b}}{1-\b^{2}}=m^{2}\in (0,1)$, which is~\eqref{kg} up to the change of unknown $u\longmapsto u^{1/p}w$. 
  As a consequence, for each $m\in (0,1)$, any solution to \eqref{kg*} provides a one-parameter family of solutions to \eqref{NLW}.

\subsection{Hamiltonian structure of \eqref{kg}} 
Denote by $v=\partial_{t}u$ and introduce
\begin{equation}\label{Hami0}
H=\frac12\int_{M}\Big(|\nabla_{x}u|^{2}+v^{2}-m^{2}u^{2}\Big)+\frac1{2p+2}\int_{M}u^{2p+2}.
\end{equation}
Then, the system \eqref{kg} is equivalent to 
\begin{equation}\label{ham}
\dot{u}=\frac{\delta H}{\delta v},\quad \dot{v}=-\frac{\delta H}{\delta u}.
\end{equation}
We write 
\begin{equation*}
u(t,x)=\sum_{n=0}^{\infty}a_{n}(t)e_{n}(x),\quad  v(t,x)=\sum_{n=0}^{\infty}b_{n}(t)e_{n}(x),
\end{equation*}
where 
\begin{align*}
(a_n)_{n\in\N}\in  h^1(\N,\R)&:=\big\{ x=(x_n)_{n\in\N} \mid \|x\|^{2}_{h^1}=\sum_{n\in\N}(1+\lambda_{n}^2)|x_n|^2 <\infty \big\},\\
(b_n)_{n\in\N}\in  \l^2(\N,\R)&:=\big\{ x=(x_n)_{n\in\N} \mid \|x\|^{2}_{\l^2}=\sum_{n\in\N}|x_n|^2 <\infty \big\},
\end{align*}
in such a way that to the continuous phase space $\X:=H^{1}\times L^{2}$ corresponds the discrete one   $h^1\times \l^2$. We endow this space with the natural norm 
\begin{equation*}
\|X\|=\|u\|_{H^{1}}+\|v\|_{L^{2}},\quad \text{for}\quad  X=(u,v).
\end{equation*}
 We  define the two dimensional subspace $\X_{h}\subset \X$ spanned by the stable/unstable modes (hyperbolic modes) of $X$
\begin{equation}\label{defX0}
\X_{h}=\big\{X\in \X\; |\;\forall\,n\geq 1,\quad a_{n}=b_{n}=0 \big\},
\end{equation}
and we observe that $\X_{h}$ is an invariant subspace for \eqref{ham}. We also define  the infinite dimensional subspace $\X_{c}\subset \X$ spanned by the center modes (elliptic modes) of~$X$
\begin{equation}\label{defXc}
\X_{c}=\big\{X\in \X\; |\;a_{0}=b_{0}=0\big\}.
\end{equation}
We clearly have $\X=\X_{h} \oplus\X_{c}$. Denote by $P : \X\longrightarrow \X_{h}$ the projection onto $\X_{h}$ and define $Q=I-P :  \X\longrightarrow \X_{c}$ the projection onto $\X_{c}$. In the sequel, for $X\in \X$, we use the notation $X_{h}=PX$ and $X_{c}=QX$.

In the coordinates $(a_{n},b_{n})_{n\geq0}$, the Hamiltonian in \eqref{Hami0} reads 
\begin{equation}\label{Hami}
H=\frac12\sum_{n=0}^{\infty}\Big[(\lambda_{n}^{2}-m^{2})a^{2}_{n}+b^{2}_{n}\Big]+\frac1{2p+2}\int_{M}\Big(\sum_{k=0}^{\infty}a_{k}e_{k}(x)\Big)^{2p+2}\text{d}x,
\end{equation}
and the system \eqref{ham} becomes
 \begin{equation}\label{syst} 
\left\{
\begin{aligned}
&\dot{a}_{n} =b_{n},\quad n\geq 0\\
&\dot{b}_{n} =-(\lambda_{n}^{2}-m^{2})a_{n}-\int_{M}\Big(\sum_{k=0}^{\infty}a_{k}e_{k}(x)\Big)^{2p+1}e_{n}(x)\text{d}x,\quad n\geq 0.
\end{aligned}
\right.
\end{equation} 

\subsection{The homoclinic orbit}\label{Sect.14}
The space-stationary solutions of \eqref{kg} exactly correspond to the  solutions of \eqref{syst} satisfying $a_{n}=b_{n}=0$ for $n\geq 1$. In this case, the equation on $(a_{0},b_{0})$ reads 
 \begin{equation} \label{toy0}
\left\{
\begin{aligned}
&\dot{a}_{0} =b_{0}\\
&\dot{b}_{0} =m^{2}a_{0}- a^{2p+1}_{0}, 
\end{aligned}
\right.
\end{equation} 
and this system possesses a homoclinic solution to 0 which we will denote in the sequel by 
$$\h: t\mapsto (\alpha(t),\beta(t)),$$
 and we will denote by $\mathcal{K}_{0}$ the curve which is described (see Figure \ref{PhasePortraita0b0}). Indeed we can explicitly compute 
\begin{equation}\label{def.homo}
\a(t)=\frac{m^{1/p}(p+1)^{1/(2p)}}{\big(\cosh(pmt)\big)^{1/p}},\quad \b(t)=-m^{1/p+1}(p+1)^{1/(2p)}\frac{\sinh(pmt)}{\big(\cosh(pmt)\big)^{1/p+1}},
\end{equation}
and we have the bounds
\begin{equation}\label{ab}
|\a(t)|\leq C\e^{-m|t|},\quad |\b(t)|\leq C\e^{-m|t|}, \quad \text{for all}\quad t\in \R.
\end{equation}
For $\eta>0$ denote by $\mathcal{K}_{\eta}$ the trajectory of \eqref{toy0} given by the  initial conditions $a_{0}(0)=\eta$, $b_{0}(0)=0$ (see Figure \ref{PhasePortraita0b0}).  In our forthcoming paper \cite{GJT}, we study the long time stability of the  trajectory   $\mathcal{K}_{\eta}$ for $0<\eta\ll1$, by the flow of the system \eqref{syst}.
 
\begin{figure}[h!]
\centering 
\def\svgwidth{100mm}
\input{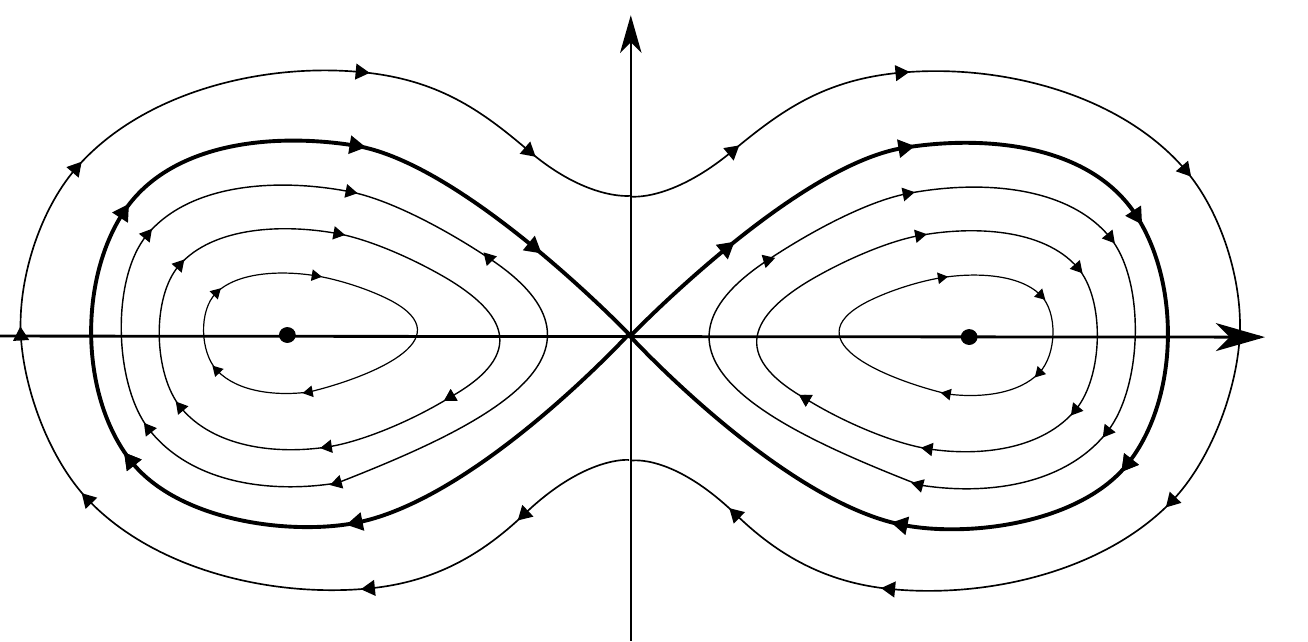_tex.tex}
\caption{Phase portrait for the space-stationary set $a_{n}=b_{n}=0$ for $n\geq 1$.}
\label{PhasePortraita0b0}
\end{figure}

\subsection{The main results on the Klein-Gordon equation}
Under Assumptions \ref{assumption0} and \ref{assumption},  the equation \eqref{kg} is globally well-posed in $\X=H^{1}(M)\times L^{2}(M)$ (see Theorem \ref{thm0} in Section \ref{Sect.2}). \ligne

Our main  result describes possible dynamics near the homoclinic orbit: We show that there are many solutions which stay close to $\h$. The proof of this result is  inspired from the work \cite{GS1} of M. Groves and G. Schneider. In our case, the novelty is that we have not to assume $m$ to be small, {\it i.e.} we are not dealing with small solutions of \eqref{kg}.\\
For any function $f\in L^{2}(M)$, denote by $\dis f_{c}=f-\int_{M}f$. Then we can state
\begin{theo}\ph\label{thm1}
Let $M$, $p$ and $m$ satisfy Assumptions \ref{assumption0} and \ref{assumption}. There exist $C>0$ and $\eps_{0}>0$ so that  the following holds: Let $(f,g)\in H^{1}(M)\times L^{2}(M)$ be such that $\|f_{c}\|_{H^{1}}+\|g_{c}\|_{L^{2}}\leq \eps_{0}$, then there exists a solution $u$ to \eqref{kg} so that 
\begin{equation}\label{sol1}
\partial_{t}u(0,\cdot)=g_{c},\quad u_{c}(0,\cdot)=f_{c},
\end{equation}
and so that for all $t\geq 0$
\begin{equation}\label{sol2}
\|u(t,\cdot)-\a(t)\|_{H^{1}}+\|\partial_{t}u(t,\cdot)-\b(t)\|_{L^{2}}\leq C\big(\|f_{c}\|_{H^{1}}+\|g_{c}\|_{L^{2}}\big).
\end{equation}
Furthermore, as $t\longrightarrow + \infty$, $u(t,\cdot)$ tends to a solution of the local center manifold $W^c$ (see Definition~\ref{def46}).
\end{theo}
Notice that Theorem \ref{thm1} is not a stability result since we only fix the elliptic part of the initial datum, and the statement of the theorem is that there exists a way to define the hyperbolic part such that the solution satsifies \eqref{sol2}.  For example if $f_{c}=g_{c}=0$, the corresponding $u$ is the homoclinic orbit $h$.

The equation \eqref{kg} is reversible. More precisely, 
 the equation \eqref{kg} is invariant under the transformation $u(t,x)\longmapsto u(-t,x)$. This corresponds to the symmetry $S$ defined by 
  \begin{equation*}
  S(a_{n})=a_{n}\quad   S(b_{n})=-b_{n},\quad \text{for} \quad n\geq 0.
  \end{equation*}
 This is  a symmetry of reversibility: Namely, denote by $V_{H}$ the vector field \eqref{syst}, then $V_{H}\circ S=-S\circ V_{H}$.\ligne

Using this symmetry we are able to precise the result of Theorem \ref{thm1} in the case $g=0$.

\begin{theo}\ph\label{coro.thm}
Let $M$, $p$ and $m$ satisfy Assumptions \ref{assumption0} and \ref{assumption}. There exist $C>0$ and $\eps_{0}>0$ so that  the following holds: Let $f \in H^{1}(M)$ be  such that $\|f_{c}\|_{H^{1}}\leq \eps_{0}$, then there exists a solution $u$ to~\eqref{kg} so that 
\begin{equation} \label{sol3}
\partial_{t}u(0,\cdot)=0,\quad u_{c}(0,\cdot)=f_{c},
\end{equation}
for all $t\in \R$ 
$$u(-t)=Su(t),$$
 and so that 
\begin{equation*} 
\|u(t,\cdot)-\a(t)\|_{H^{1}}+\|\partial_{t}u(t,\cdot)-\b(t)\|_{L^{2}}\leq C\|f_{c}\|_{H^{1}}.
\end{equation*}
Furthermore, as $t\longrightarrow + \infty$, $u(t,\cdot)$ tends to a solution of the local center manifold $W^c$.
\end{theo}

Since $u$ is reversible, observe that if $u$ tends to $w$ when $t\longrightarrow +\infty$, then $u$ tends to $Sw$ when $t\longrightarrow -\infty$. 

\begin{rema} Theorem \ref{coro.thm} proves the existence of heteroclinic orbits from some orbit on the center manifold $W^c$ to another one. It would be interesting to precise the dynamics on the center manifold. Actually we can expect the existence of invariant tori on $W^c$ which are perturbations of the tori obtained for the linearized equation around the homoclinic orbit (see section 3). Nevertheless, to prove this, we would need a KAM theory for infinite dimensional tori which is an open problem up to now (see however \cite{Bour}). Notice that even if such theory would exist, it would prove that most of the tori, but not all of them, of the linearized system are preserved. Thus it would be  impossible to conclude   that  our heteroclinic orbits tends to KAM tori of $W^c$  when $t\to \pm\infty$ as we can expect.  \end{rema}

  \subsection{Plan of the paper}
 In Section \ref{Sect.2} we prove Theorem \ref{thm0}. Section \ref{Sect.3} is devoted to the study of the linearisation of \eqref{kg} around the homoclinic orbit (Theorem \ref{thm3}). In Section \ref{Sect.4} we construct the center manifold for the Klein-Gordon equation and prove Theorem \ref{thm1}. In the appendix, we recall a result which is useful in the proof of Theorem \ref{thm3}.

  \begin{enonce*}{Notations}
 In this paper $c,C>0$ denote constants the value of which may change
from line to line. These constants will always be universal, or depend on the fixed quantities $m$ and $p$.\\
We denote by $\N$ the set of the non negative integers, and $\N^{*}=\N\backslash\{0\}$. We set $\X=H^{1}(M)\times L^{2}(M)$.
 \end{enonce*}
 
 \begin{acknowledgements}
The authors want to thank Eric Lombardi and Romain Joly for clarifications in dynamical systems. T.J. also thanks Guido Schneider for his kind invitation to Stuttgart.
\end{acknowledgements}

\section{Preliminaries: Proof of the first results on (KG)}\label{Sect.2}

\subsection{Stationary solutions of \eqref{kg}}

We look for the time-independent solutions to \eqref{kg}, which correspond to equilibrium points for the system \eqref{syst}.
\begin{lemm}\ph\label{lem.equi}
The stationary solutions to  \eqref{kg} are $u=0$, $u=m^{1/p}$ and  $u=-m^{1/p}$. 
\end{lemm}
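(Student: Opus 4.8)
The plan is to characterize stationary solutions directly from the PDE rather than from the infinite system \eqref{syst}. A stationary solution $u=u(x)$ satisfies $-\Delta u - m^2 u + u^{2p+1}=0$ on $M$. First I would integrate this equation against $u$ over $M$ (using that $M$ has no boundary, so $\int_M u\Delta u = -\int_M |\nabla u|^2$): this gives
\begin{equation*}
\int_M |\nabla u|^2 - m^2\int_M u^2 + \int_M u^{2p+2} = 0.
\end{equation*}
This alone is not conclusive, so the key idea is instead to test against $e_0=1$, i.e. to integrate the equation itself over $M$. Since $\int_M \Delta u = 0$, we obtain $-m^2 \int_M u + \int_M u^{2p+1} = 0$, which is one scalar constraint but still allows non-constant $u$ a priori. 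The real work is to rule out non-constant stationary solutions.

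To do that, I would argue as follows. Decompose $u = a_0 + u_c$ where $a_0 = \int_M u$ and $u_c$ has zero mean, i.e. $u_c \in \mathrm{span}(e_n)_{n\ge 1}$. Projecting the stationary equation onto the orthogonal complement of $e_0$ gives $-\Delta u_c - m^2 u_c + (u^{2p+1})_c = 0$. Pairing this with $u_c$ and using $\langle -\Delta u_c, u_c\rangle \ge \lambda_1^2 \|u_c\|_{L^2}^2$ (the Poincaré/spectral-gap inequality, valid since $u_c \perp e_0$) together with Assumption \ref{assumption0} ($m < \lambda_1$, so $\lambda_1^2 - m^2 > 0$) yields
\begin{equation*}
(\lambda_1^2 - m^2)\|u_c\|_{L^2}^2 \le \langle -\Delta u_c - m^2 u_c, u_c\rangle = -\langle (u^{2p+1})_c, u_c\rangle = -\langle u^{2p+1}, u_c\rangle,
\end{equation*}
using in the last step that $u^{2p+1} - (u^{2p+1})_c$ is constant and $u_c$ has zero mean. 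So the obstacle becomes: show $-\langle u^{2p+1}, u_c\rangle \le 0$, which forces $u_c = 0$. The natural way is a convexity/monotonicity argument: the map $s\mapsto s^{2p+1}$ is nondecreasing, so for the "rearrangement-type" pairing one expects $\langle u^{2p+1}, u - a_0\rangle = \langle u^{2p+1} - a_0^{2p+1}, u - a_0\rangle \ge 0$ since $(s^{2p+1}-a_0^{2p+1})(s-a_0)\ge 0$ pointwise. Combining, $(\lambda_1^2-m^2)\|u_c\|_{L^2}^2 \le 0$, hence $u_c\equiv 0$ and $u$ is a constant.

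Once $u$ is known to be constant, $u=c$, the equation reduces to $-m^2 c + c^{2p+1}=0$, i.e. $c(c^{2p}-m^2)=0$, whose real roots are $c=0$ and $c = \pm m^{1/p}$ (the only real $2p$-th roots of $m^2$ being $\pm m^{1/p}$). This gives exactly the three claimed solutions. I expect the main obstacle to be the rigorous treatment of the sign of $\langle u^{2p+1}, u_c\rangle$: one must be careful that the pointwise inequality $(u(x)^{2p+1} - a_0^{2p+1})(u(x) - a_0)\ge 0$ integrates correctly and that the elliptic regularity needed to justify the pairings (so that $u$ is smooth enough for $-\Delta u$ to make sense classically) is in place — but this is standard for a semilinear elliptic equation with the given subcriticality from Assumption \ref{assumption}.
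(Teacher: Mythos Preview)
Your argument is correct and complete. The key inequality $\langle u^{2p+1},u_c\rangle=\langle u^{2p+1}-a_0^{2p+1},u-a_0\rangle\ge 0$ is exactly right, since $s\mapsto s^{2p+1}$ is nondecreasing, and together with the spectral gap $\lambda_1^2>m^2$ it forces $u_c=0$.

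The paper proceeds differently: instead of projecting onto the nonconstant modes and pairing with $u_c$, it \emph{differentiates} the stationary equation, obtaining $-\Delta\nabla g - m^2\nabla g + (2p+1)g^{2p}\nabla g=0$, and then pairs with $\nabla g$. This yields
\[
\int_M(\Delta g)^2 + (2p+1)\int_M g^{2p}|\nabla g|^2 = m^2\int_M|\nabla g|^2,
\]
and since $m^2\sum_{k\ge 1}\lambda_k^2 g_k^2 < \sum_{k\ge 1}\lambda_k^4 g_k^2$ whenever $\nabla g\not\equiv 0$, one reaches a contradiction unless $\nabla g=0$. Both arguments exploit the same two ingredients---the spectral gap from Assumption~\ref{assumption0} and a positivity coming from the monotone nonlinearity---but at different levels: you use monotonicity of $s\mapsto s^{2p+1}$ directly on $u$, while the paper uses nonnegativity of its derivative $(2p+1)g^{2p}$ on $\nabla g$. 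Your route has the mild advantage of needing one derivative less (the paper implicitly uses $g\in H^3$ to write $\Delta\nabla g$, which is of course available by bootstrap but not stated); the paper's route has the advantage of avoiding any splitting into mean and fluctuation and working entirely with the gradient.
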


\begin{proof}
Assume that $u(t,x)=g(x)\in H^{1}(M)$ is solution to \eqref{kg}. We apply $\nabla$ to the equation and get
\begin{equation*}
-\Delta \nabla g-m^{2}\nabla g+(2p+1)g^{2p}\nabla g=0.
\end{equation*}
We multiply this equation with $\nabla g$ and integrate (by parts) on $M$
\begin{equation}\label{g}
\int_{M}\big(\Delta g\big)^{2}+(2p+1)\int_{M}g^{2p}|\nabla g|^{2}=m^{2}\int_{M}|\nabla g|^{2}.
\end{equation}
Now, write $\dis g=\sum_{k=0}^{\infty} g_{k}e_{k}$. Since $(\lambda_{k})_{k\geq 0}$ is non decreasing and $0<m<\lambda_{1}$ we have 
\begin{equation*}
m^{2}\int_{M}|\nabla g|^{2}=m^{2}\sum_{k=1}^{\infty}\lambda^{2}_{k}g^{2}_{k}<  \sum_{k=1}^{\infty}\lambda^{4}_{k}g^{2}_{k}= \int_{M}\big(\Delta g\big)^{2}.
\end{equation*}
This inequality together with \eqref{g} implies that $\nabla g=0$, and we conclude.
\end{proof}

\subsection{Global well-posedness of \eqref{kg}}
The aim of this section is to prove the following result.
\begin{theo}\ph\label{thm0}
Let $M$, $p$ and $m$ satisfy Assumptions \ref{assumption0} and \ref{assumption}. Then for all $(u_{0},u_{1})\in \X$ there exists a unique solution to \eqref{kg}
\begin{equation*}
u\in \mathcal{C}^{0}\big(\R;H^{1}(M)\big)\cap \mathcal{C}^{1}\big(\R;L^{2}(M)\big).
\end{equation*}
Moreover, $u$ is bounded in the energy space: $\dis \sup_{t\in \R}\|(u,\partial_{t}u)\|_{\X}\leq  C$.
\end{theo}

The proof is classical: it relies on a fixed point argument in $H^{1}(M)$ combined with the conservation of the energy.\ligne

To begin with, for $\dis u=\sum_{k=0}^{\infty}a_{k}e_{k}$ and $\dis \partial_{t}u=\sum_{k=1}^{\infty}b_{k}e_{k}$ we set 
 \begin{equation}\label{defJ}
 J:=\frac12\sum_{k=1}^{\infty}\Big[(\lambda_{k}^{2}-m^{2})a^{2}_{k}+b^{2}_{k}\Big].
 \end{equation}
We also define  
\begin{equation}\label{defU}
 U:=u_{c}= \sum_{k=1}^{\infty}a_{k}e_{k}
\end{equation}
the spectral projection away the mode 0. 
 Observe that under Assumption \ref{assumption0} there exist $C_{1},C_{2}>0$ so that for all $(u,\partial_{t}u)\in \X$
 \begin{equation}
 C_{1}\|u_{c},\partial_{t}u_{c}\|_{\X}\leq J^{1/2}\leq C_{2}\|u_{c},\partial_{t}u_{c}\|_{\X},
 \end{equation}
 because $ \|u_{c},\partial_{t}u_{c}\|_{\X} =\|u_{c}\|_{H^{1}}+\|\partial_{t}u_{c}\|_{L^{2}} $ and 
 $$  \|u_{c}\|^{2}_{H^{1}}+\|\partial_{t}u_{c}\|^{2}_{L^{2}}  = \sum_{k=1}^{\infty}\big(\lambda_{k}^{2} a^{2}_{k}+b^{2}_{k}\big).$$
 
In the sequel we will need

\begin{lemm}\ph\label{lemsobo}
Let $u\in H^{1}(M)$. Then  \\[5pt]
$\bullet$ When $M$ has dimension 1 or 2, for all $2\leq q<+\infty$
\begin{equation*}
\|U\|_{L^{q}(M)}\leq C_{q} J^{1/2}.
\end{equation*}
$\bullet$ When $M$ has dimension 3, for all $2\leq q\leq 6$
\begin{equation*}
\|U\|_{L^{q}(M)}\leq C_{q} J^{1/2}.
\end{equation*}
 \end{lemm}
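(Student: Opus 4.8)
The plan is to prove the Sobolev-type estimate $\|U\|_{L^q(M)}\leq C_q J^{1/2}$ by reducing it to the standard Sobolev embedding on $M$ and then controlling the resulting $H^1$-norm of $U$ by $J^{1/2}$. First I would recall the classical Sobolev embeddings on a compact manifold without boundary: in dimension $1$ or $2$, $H^1(M)\hookrightarrow L^q(M)$ for every $q\in[2,\infty)$, and in dimension $3$, $H^1(M)\hookrightarrow L^q(M)$ for $q\in[2,6]$ (the exponent $6=2\cdot 3/(3-2)$ being the critical one). These give $\|U\|_{L^q}\leq C_q\|U\|_{H^1}$ with $C_q$ depending only on $M$ and $q$, and these are exactly the ranges of $q$ appearing in the statement, which is what makes the dichotomy natural.

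Next I would relate $\|U\|_{H^1}$ to $J^{1/2}$. Since $U=u_c=\sum_{k\geq 1}a_k e_k$ has zero mean, we have $\|U\|_{H^1}^2=\sum_{k\geq 1}(1+\lambda_k^2)a_k^2$, whereas $2J=\sum_{k\geq 1}\big[(\lambda_k^2-m^2)a_k^2+b_k^2\big]\geq \sum_{k\geq 1}(\lambda_k^2-m^2)a_k^2$. Under Assumption \ref{assumption0}, $0<m<\lambda_1\leq\lambda_k$ for all $k\geq 1$, so $\lambda_k^2-m^2\geq (1-m^2/\lambda_1^2)\lambda_k^2\geq c\,\lambda_k^2$ for a fixed constant $c=c(m)>0$; combining with $\lambda_k\geq\lambda_1$ one also gets $\lambda_k^2-m^2\geq c'(1+\lambda_k^2)$ for a fixed $c'>0$. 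Hence $\|U\|_{H^1}^2\leq C J$, i.e. $\|U\|_{H^1}\leq C J^{1/2}$. This is essentially the lower bound in the equivalence $C_1\|u_c,\partial_t u_c\|_{\X}\leq J^{1/2}$ already recorded in the excerpt, applied to the $u$-component only, so I may simply invoke it. Chaining the two bounds yields $\|U\|_{L^q(M)}\leq C_q\|U\|_{H^1}\leq C_q J^{1/2}$.

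There is no real obstacle here: the only point requiring any care is the ``boundary'' case in dimension $3$, where the Sobolev embedding into $L^6$ is sharp and fails for larger $q$, which is precisely why the statement restricts to $q\leq 6$ in that dimension and why Assumption \ref{assumption} (requiring $p=1$ when $\dim M=3$) is imposed so that only powers of $U$ up to the sixth appear later. I would therefore present the argument in two short steps — quote the Sobolev embedding with the correct exponent range, then quote the norm equivalence $\|U\|_{H^1}\leq C J^{1/2}$ — and remark that the hypothesis $u\in H^1(M)$ guarantees $(a_k)\in h^1$ so that all the series involved converge and the manipulations are justified.
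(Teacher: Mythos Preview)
Your proposal is correct and follows exactly the same route as the paper: quote the Sobolev embedding $H^1(M)\hookrightarrow L^q(M)$ for the appropriate range of $q$, then bound $\|U\|_{H^1}$ by $J^{1/2}$ via the norm equivalence already recorded just before the lemma. The paper's own proof is in fact a single sentence invoking Sobolev and saying ``the result follows'', so your write-up is more detailed than what appears there.
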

 
\begin{proof}
By Sobolev, in each of the previous cases, there exists $C_{q}>0$  so that for all $U\in H^{1}(M)$ we have $\|U\|_{L^{q}(M)}\leq C_{q} \|U\|_{H^{1}(M)}$ and the result follows.
\end{proof}

We then can prove the following a priori estimate
\begin{lemm}\ph\label{bounded} Let $u(t,\cdot)$ be a solution of \eqref{kg} and 
denote by $H^{0}$ the constant value of $H(u(t,\cdot))$ along this trajectory. For all $t$ for which the solution is defined, we have
\begin{equation}\label{4.8}
\int_{M}\big((\partial_{t}u)^{2}+|\nabla u|^{2}\big)=b^{2}_{0}+\sum_{k=1}^{\infty}\big(b^{2}_{k}+\lambda_{k}^{2}a^{2}_{k}\big)\leq 2H^{0}+\frac{p}{p+1}m^{2+2/p}.
\end{equation}
Furthermore there exists $C\equiv C(H^{0})>0$ such that $|a_0(t)|\leq  C$ for all $t$.
\end{lemm}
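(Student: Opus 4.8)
The plan is to exploit conservation of the Hamiltonian $H$ together with the sign of the nonlinear term. Write $H^0 = H(u(t,\cdot))$, which is constant along the flow by \eqref{ham}. From \eqref{Hami0} we have
\[
H^0 = \frac12\int_M\big((\partial_t u)^2 + |\nabla u|^2 - m^2 u^2\big) + \frac1{2p+2}\int_M u^{2p+2}.
\]
Split $u = a_0 + U$ with $U = u_c$ as in \eqref{defU}; since $e_0 = 1$ and $\mathrm{Vol}\,M = 1$, the modes decouple in the quadratic part and $\int_M u^2 = a_0^2 + \|U\|_{L^2}^2$, while $\int_M|\nabla u|^2 = \int_M|\nabla U|^2$. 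Hence
\[
H^0 = \frac12 b_0^2 - \frac12 m^2 a_0^2 + J + \frac1{2p+2}\int_M u^{2p+2},
\]
with $J$ as in \eqref{defJ}. The term $\int_M u^{2p+2}\ge 0$, so
\[
\frac12 b_0^2 + J \le H^0 + \frac12 m^2 a_0^2 - \frac1{2p+2}\int_M u^{2p+2}.
\]
The point is that the function $s\mapsto \frac12 m^2 s^2 - \frac1{2p+2}s^{2p+2}$ appearing when $U$ is absent is bounded above; in general I would control $\frac12 m^2 a_0^2 - \frac1{2p+2}\int_M u^{2p+2}$ from above by a constant depending only on $m,p$. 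This is where the only real work lies.

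For that step I would argue as follows. By Jensen (or simply $\int_M u^{2p+2}\ge (\int_M u\cdot 1)^{?}$ — more carefully, convexity of $t\mapsto t^{2p+2}$ and $\mathrm{Vol}\,M=1$) one gets $\int_M u^{2p+2}\ge a_0^{2p+2}$, since $a_0 = \int_M u$. Therefore
\[
\frac12 m^2 a_0^2 - \frac1{2p+2}\int_M u^{2p+2} \le \frac12 m^2 a_0^2 - \frac1{2p+2}a_0^{2p+2} =: \phi(a_0),
\]
and $\phi$ is a smooth function on $\R$ with $\phi(s)\to -\infty$ as $|s|\to\infty$, so $\phi$ attains a finite maximum. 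A direct computation of the critical points ($\phi'(s) = m^2 s - s^{2p+1} = 0$, i.e. $s=0$ or $s = \pm m^{1/p}$) gives $\max_s \phi(s) = \phi(\pm m^{1/p}) = \frac12 m^{2+2/p} - \frac1{2p+2}m^{2+2/p} = \frac{p}{2(p+1)}m^{2+2/p}$. Plugging this in yields $\frac12 b_0^2 + J \le H^0 + \frac{p}{2(p+1)}m^{2+2/p}$, and multiplying by $2$ (and using $b_0^2 + 2J = b_0^2 + \sum_{k\ge1}((\lambda_k^2-m^2)a_k^2 + b_k^2)$, then bounding $\sum_{k\ge1}\lambda_k^2 a_k^2$ by noting $\sum_{k\ge1}(\lambda_k^2-m^2)a_k^2 = 2J - \sum b_k^2$ and $\lambda_k^2 \le \lambda_k^2$... actually I just keep $\int_M((\partial_t u)^2 + |\nabla u|^2) = b_0^2 + \sum_{k\ge1}(b_k^2+\lambda_k^2 a_k^2) \le 2H^0 + 2\cdot\frac12 m^2(\ldots)$) gives exactly \eqref{4.8} after reorganising; the cleanest route is to observe $\int_M((\partial_t u)^2+|\nabla u|^2) = 2H^0 + m^2\int_M u^2 - \frac1{p+1}\int_M u^{2p+2} \le 2H^0 + m^2 a_0^2 - \frac1{p+1}a_0^{2p+2} + m^2\|U\|_{L^2}^2 - (\text{lower order})$ — and then absorb the $U$-contributions. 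In fact the shortest argument is: $2H^0 = \int_M((\partial_t u)^2+|\nabla u|^2) - m^2\int_M u^2 + \frac1{p+1}\int_M u^{2p+2}$, and since $-m^2\int_M u^2 + \frac1{p+1}\int_M u^{2p+2}$ splits into the $a_0$-part $-m^2 a_0^2 + \frac1{p+1}a_0^{2p+2} \ge -\frac{p}{p+1}m^{2+2/p}$ (minimum of $-2\phi$) plus a nonnegative contribution from the cross terms and $U$, by the above convexity bound, we conclude $\int_M((\partial_t u)^2+|\nabla u|^2) \le 2H^0 + \frac{p}{p+1}m^{2+2/p}$.

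Finally, for the pointwise bound on $a_0$: from the inequality $\frac1{2p+2}a_0^{2p+2} \le \frac1{2p+2}\int_M u^{2p+2} \le H^0 - \frac12 b_0^2 + \frac12 m^2 a_0^2 + \frac12 m^2\|U\|_{L^2}^2 - J \le H^0 + \frac12 m^2 a_0^2 + C\|U\|_{L^2}^2$, and using $\|U\|_{L^2}^2 \le C J^{1/2}\cdot J^{1/2} \le C(2H^0 + \frac{p}{p+1}m^{2+2/p})$ which is already bounded by \eqref{4.8}, one gets $a_0^{2p+2} \le C m^2 a_0^2 + C(H^0)$; since $2p+2 > 2$, this forces $|a_0(t)|\le C(H^0)$. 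The main obstacle — really the only non-routine point — is establishing the upper bound on the quantity $\frac12 m^2 a_0^2 - \frac1{2p+2}\int_M u^{2p+2}$; everything else is bookkeeping with the spectral decomposition and Assumption \ref{assumption0} (which guarantees $\lambda_k^2 - m^2 > 0$ for $k\ge1$, so $J$ is equivalent to the $\X$-norm of $u_c$).
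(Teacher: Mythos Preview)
Your overall strategy is right, and you have the key computation: the function $\phi(s)=\tfrac12 m^2 s^2-\tfrac1{2p+2}s^{2p+2}$ is maximised at $s=\pm m^{1/p}$ with value $\tfrac{p}{2(p+1)}m^{2+2/p}$. But you then mis-apply it. You try to split
\[
-m^2\int_M u^2 + \frac1{p+1}\int_M u^{2p+2}
= \Big(-m^2 a_0^2 + \frac1{p+1}a_0^{2p+2}\Big) + R
\]
and claim $R\ge 0$ ``by the above convexity bound''. That is false: the remainder is
\[
R = -m^2\|U\|_{L^2}^2 + \frac1{p+1}\Big(\int_M u^{2p+2}-a_0^{2p+2}\Big),
\]
and while Jensen makes the second term nonnegative, the first term $-m^2\|U\|_{L^2}^2$ is nonpositive and can dominate. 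Take $a_0=0$ and $U$ small: then $R\approx -m^2\|U\|_{L^2}^2<0$. So your decomposition does not give the inequality, and the various ``absorb the $U$-contributions'' remarks are hiding exactly this problem.

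The fix is immediate once you notice that your bound on $\phi$ is a \emph{pointwise} inequality: for every $x\in M$,
\[
m^2 u(x)^2 - \frac1{p+1}u(x)^{2p+2} = 2\phi\big(u(x)\big)\le \frac{p}{p+1}m^{2+2/p}.
\]
Integrate over $M$ (volume $1$) and you get $m^2\int_M u^2 - \frac1{p+1}\int_M u^{2p+2}\le \frac{p}{p+1}m^{2+2/p}$, which is exactly \eqref{4.8}. This is what the paper does, phrased as Young's inequality $c_1c_2\le \frac{\eps^q}{q}c_1^q+\frac{1}{r\eps^r}c_2^r$ with $c_1=u^2$, $c_2=1$, $q=p+1$, $\eps=m^{-2/(p+1)}$; your optimisation of $\phi$ is the same computation. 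There is no need for Jensen or for separating $a_0$ and $U$ at this stage. Your argument for the bound on $|a_0|$ is then fine and matches the paper's.
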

 
 \begin{proof} We assume that $M$ has dimension 1 or 2.
 By the conservation of the energy for \eqref{kg}, we have
 \begin{equation*}
 \int_{M}\big((\partial_{t}u)^{2}+|\nabla u|^{2}\big)+\frac{1}{p+1}\int_{M}u^{2p+2}=2H^{0}+m^{2}\int_{M}u^{2}.
 \end{equation*}
We apply  the Young inequality
\begin{equation*} 
c_{1}c_{2}=(\eps c_{1})(\eps^{-1}c_{2})\leq \frac{\eps^{q}}{q}c_{1}^{q}+\frac{1}{r\eps^{r}}c_{2}^{r},\quad c_{1},c_{2}\geq 0,\quad \eps>0, \quad \frac1{q}+\frac1{r}=1,
\end{equation*} 
with $c_{1}=u^{2}$, $c_{2}=1$, $q=p+1$ and $\eps=m^{-2/(p+1)}$ and deduce \eqref{4.8}. 

Recall the notations \eqref{defJ} and \eqref{defU}. Then   we have
\begin{equation}\label{H0}
H^0=\frac12(b^{2}_{0}-m^{2}a^{2}_{0})+J+\frac1{2(p+1)}\int_{M}\big(a_{0}+U\big)^{2p+2}\text{d}x.
\end{equation} 
By \eqref{4.8}, $b_0$ and $J$ are bounded and  moreover by Lemma \ref{lemsobo} and \eqref{4.8} we deduce that $\int_{M}|U|^q \text{d}x\leq CJ^{q/2}$ for all $q\geq 2$.  Thus \eqref{H0} gives $H^0$ as a polynomial in $a_{0}$ with bounded coefficients and thus $a_0$ has to be  bounded by a constant depending only on $H^0$. \\
The argument is similar when $M$ has dimension 3 and $p=1$.
\end{proof}
 
 \begin{rema}
 It is easy to check that the points $u=\pm m^{1/p}$ correspond to the minimum of $H$, and that for this choice $\dis H^{0}=-\frac{p}{2(p+1)}m^{2+2/p}$. In particular, the r.h.s of \eqref{4.8} is nonnegative.
 \end{rema}

\begin{proof}[Proof of Theorem \ref{thm0}]
First we show that \eqref{kg} is locally well-posed. Let $(u_{0},u_{1})\in \X$ and for~$T>0$ define the space 
\begin{equation*}
E_{T}=\big\{\,u\in \mathcal{C}^{0}\big([-T,T];H^{1}(M)\big)\cap \mathcal{C}^{1}\big([-T,T];L^{2}(M)\big),\;\; 
\sup_{|t|\leq T}\|(u,\partial_{t}u)\|_{\X}\leq 2\|(u_{0},u_{1})\|_{\X}\,\big\}.
\end{equation*}
 For $f:\R\longrightarrow \R$, a continuous and bounded function, we can define  a bounded operator 
 $$f(-\Delta) :L^{2}(M)\longrightarrow L^{2}(M)$$ by 
 $$ f(-\Delta)u=\sum _{k=0}^{+\infty}f(\lambda^{2}_{k}) a_{k}e_{k},\quad  u=\sum _{k=0}^{+\infty} a_{k}e_{k}.$$
 With this definition, denote by $K(t)$ the  free wave propagator  
 \begin{equation*}
 K(t)(u_{0},u_{1})=\cos(t\sqrt{-\Delta} )u_{0}+ \frac{\sin( t\sqrt{-\Delta})}{\sqrt{-\Delta}}u_{1}.
 \end{equation*}
  Then we show that the mapping $\Phi$ defined by 
\begin{equation*}
\Phi(t)u=K(t)(u_{0},u_{1})+\int_{0}^{t}\frac{\sin(t-s)\sqrt{-\Delta}}{\sqrt{-\Delta}}\big(m^{2}u-u^{2p+1}\big)(s)\text{d}s,
\end{equation*}
is a contraction of $E_{T}$ for $T$ small enough. Namely, by the Sobolev embeddings and the fact that $\|K\|_{L(\X,\X)}\leq C$, we obtain that we can take  $T=c_{0}\min\big(m^{-2},\|(u_{0},u_{1})\|^{-2p}_{\X}\big)$ where $c_{0}>0$ is a small absolute constant.\\
In order to prove the global well-posedness, we iterate the previous argument. We then obtain a sequence of times $T_{n}$ with $T_{n+1}=c_{0}\min\big(m^{-2},\|(u(T_{n}),\partial_{t}u(T_{n})\|^{-2p}_{\X}\big)$ such that the solution is defined on $(-\sum_{n=0}^\infty T_n, \sum_{n=0}^\infty T_n)$.  By Lemma \ref{bounded}, for any $t>0$, $\|(u(t),\partial_{t}u(t))\|^{-2p}_{\X}\geq C(H^{0})$, hence $\sum_{n\geq 0}T_{n}=+\infty$ which gives the result. 
\end{proof}
\begin{rema}
The result of Theorem \ref{thm0} indeed holds  for any $m\in \R$.
\end{rema}




 
\section{The linearized equation around the homoclinic orbit}\label{Sect.3}

In this section, we study the system \begin{equation} \label{toy1}
\left\{
\begin{aligned}
&\dot{a}_{0} =b_{0}\\
&\dot{b}_{0} =m^{2}a_{0}- a^{2p+1}_{0},
\end{aligned}
\right.
\end{equation} 
and for $n\geq 1$
\begin{equation} \label{toy2}
\left\{
\begin{aligned}
&\dot{a}_{n} =b_{n}\\
&\dot{b}_{n} =-(\lambda_{n}^2-m^{2})a_{n}-(2p+1)a^{2p}_0a_n.
\end{aligned}
\right.
\end{equation} 
Indeed, if one considers that $a_{0}$ is given by \eqref{toy1}, the system \eqref{toy2} is the linearisation of \eqref{kg} around the solution $u=a_{0}$. In particular, denoting $V(t)=(2p+1)a^{2p}_{0}(t)$ and $w=\sum_{k\geq 1}a_{k}e_{k}$, \eqref{toy2} is equivalent to the following linear wave equation with time-dependent potential
   \begin{equation}\label{linearwave}
   \partial^{2}_{t}w-\Delta w-m^{2}w+V(t)w=0.
   \end{equation}
Notice that the system \eqref{toy1}, \eqref{toy2} is not Hamiltonian for the canonical structure, but it is reversible. On the other hand, for $a_0$ given, the system \eqref{toy2} is Hamiltonian for the canonical structure. \ligne

We now study  the linear evolution of \eqref{toy2}, which will be useful in the proof of Theorem \ref{thm1}.
Introduce the notations
 \begin{equation*} 
Z=\left(\begin{array}{c}
z\\\partial_{t}z\end{array}\right) ,\quad \mathcal{L}^{t}=
\left(\begin{array}{cc}
0& 1\\\Delta+m^{2}-(2p+1)\a^{2p}(t)& 0\end{array}\right),
\end{equation*}
then \eqref{toy1}, \eqref{toy2} can also be written 
\begin{equation*}
\partial_{t}Z=\L^{t}Z.
\end{equation*}

\begin{prop}\ph\label{Prop.Lin}
The  restriction $\mathcal{L}^{t}_{c} : \mathcal{X}_{c} \longrightarrow \mathcal{X}_{c}$ generates a two parameter group $K(t,\tau)$ so that 
\begin{equation*}
\sup_{t,\tau \in \R}\|K(t,\tau)\|_{L(\mathcal{X}_{c},\mathcal{X}_{c})}\leq C.
\end{equation*}
In other words, every solution to $\dis \partial_{t}Z_{c}=\L^{t}_{c}Z_{c}$ is bounded.
\end{prop}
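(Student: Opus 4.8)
The plan is to diagonalize the center part of the equation in the elliptic modes and reduce to a countable family of scalar second-order ODEs with a common time-dependent potential $V(t)=(2p+1)\a^{2p}(t)$, which decays exponentially by \eqref{ab}. For each $n\geq 1$ the equation on $a_n$ is
\begin{equation*}
\ddot a_n+\big(\lambda_n^2-m^2\big)a_n+V(t)a_n=0,
\end{equation*}
which is a Hill-type equation with frequency $\om_n=\sqrt{\lambda_n^2-m^2}>0$ (here Assumption~\ref{assumption0} is used: $\lambda_n\geq\lambda_1>m$, so $\om_n$ is real and in fact $\om_n\geq\om_1>0$). The first task is a uniform-in-$n$ a priori bound for the associated energy
\begin{equation*}
E_n(t)=\dot a_n(t)^2+\om_n^2\,a_n(t)^2,
\end{equation*}
and then to sum the $E_n$ to recover control of $\|Z_c(t)\|_{\X_c}^2\sim\sum_{n\geq1}\om_n^{-2}E_n(t)\cdot\om_n^2$ — more precisely $\|Z_c\|_{\X_c}^2$ is comparable to $\sum_{n\geq1}(\lambda_n^2 a_n^2+b_n^2)$, and since $\lambda_n^2=\om_n^2+m^2\leq C\om_n^2$ this is comparable to $\sum_n E_n$.

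**Key steps.** First I would differentiate $E_n$ to get $\dot E_n(t)=-2V(t)a_n(t)\dot a_n(t)$, hence $|\dot E_n(t)|\leq |V(t)|\,\big(\dot a_n^2+a_n^2\big)\leq \om_1^{-2}|V(t)|\,E_n(t)$ (using $\om_n\geq\om_1$ so $a_n^2\leq\om_1^{-2}\om_n^2 a_n^2\leq \om_1^{-2}E_n$). Grönwall then gives
\begin{equation*}
E_n(t)\leq E_n(\tau)\exp\!\Big(\om_1^{-2}\int_{\R}|V(s)|\,ds\Big),
\end{equation*}
and the integral is finite and independent of $n$ because $|V(s)|\leq C\e^{-2pm|s|}$ by \eqref{ab}. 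Summing over $n\geq1$ and using the norm equivalences above yields $\|Z_c(t)\|_{\X_c}\leq C\|Z_c(\tau)\|_{\X_c}$ uniformly in $t,\tau\in\R$. To upgrade this a priori estimate into the statement about a two-parameter group $K(t,\tau)$, I would first establish well-posedness of $\partial_t Z_c=\L^t_c Z_c$ on $\X_c$: since $\L^t_c$ is a bounded (by $\sup_t|V(t)|<\infty$) time-dependent perturbation of the free wave generator $\begin{pmatrix}0&1\\\Delta+m^2-\lambda_1^2\cdot 0&0\end{pmatrix}$ — more precisely $\begin{pmatrix}0&1\\ \Delta+m^2&0\end{pmatrix}$ restricted to $\X_c$ generates a $C_0$-group with polynomial (in fact bounded-away-from-the-hyperbolic-mode) behaviour — standard semigroup perturbation theory (or a direct Picard iteration / Duhamel fixed point exactly as in the proof of Theorem~\ref{thm0}) produces the evolution family $K(t,\tau)$, and the cocycle/group property $K(t,\s)K(\s,\tau)=K(t,\tau)$, $K(t,t)=\mathrm{Id}$ is automatic. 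The uniform bound $\|K(t,\tau)\|_{L(\X_c,\X_c)}\leq C$ is then exactly the Grönwall estimate applied to $Z_c(t)=K(t,\tau)Z_c(\tau)$.

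**Main obstacle.** The only genuinely delicate point is making the norm-equivalence bookkeeping uniform in $n$ and checking that the diagonalization of $\L^t_c$ in the $e_n$-basis is legitimate on $\X_c$ — i.e. that the flow really decouples mode by mode and that summing the per-mode estimates reconstructs the $\X_c$-norm with constants depending only on $m$ (through $\om_1$) and not on $n$. Once one observes $m<\lambda_1\leq\lambda_n$, so that $\om_n^2=\lambda_n^2-m^2$ satisfies $c\,\lambda_n^2\leq\om_n^2\leq\lambda_n^2$ with $c=1-m^2/\lambda_1^2>0$, the energies $E_n$ and the quantities $\lambda_n^2 a_n^2+b_n^2$ are uniformly comparable, and the argument closes. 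Everything else (the Grönwall step, the exponential decay of $V$) is routine, and I expect the proof to be short.
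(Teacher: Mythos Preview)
Your proposal is correct and follows the same overall strategy as the paper: decompose mode by mode on $\X_c$, obtain a uniform-in-$n$ bound on each scalar oscillator using only the integrability of $V(t)=(2p+1)\a^{2p}(t)$, and then sum. The one cosmetic slip is the constant in the Gr\"onwall step: from $\dot a_n^2+a_n^2\leq E_n+\om_n^{-2}E_n$ you get $(1+\om_1^{-2})$ rather than $\om_1^{-2}$, but this is harmless.

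The difference worth recording is in how the per-mode bound is obtained. You differentiate the energy $E_n$ and apply Gr\"onwall directly; the paper instead invokes Lemma~\ref{A2}, whose proof is a Duhamel formulation with a time-splitting (partition $\R_+$ into finitely many intervals on each of which $\int|q|\leq\frac12$, then iterate the Duhamel bound). Both arguments use exactly the same input, namely $\int_\R|V|<\infty$, and yield the same conclusion $\om_n^2 a_n^2+b_n^2\leq C(\om_n^2 a_n(0)^2+b_n(0)^2)$ with $C$ independent of $n$. Your energy--Gr\"onwall route is the more economical one for the bare boundedness statement; the paper's Duhamel route is set up that way because it goes on to prove more, namely Theorem~\ref{thm3}, which upgrades boundedness to a genuine scattering statement (each trajectory in $\X_c$ is asymptotic to a solution of the free system $\partial_t Z_c=\Lambda_c Z_c$, hence homoclinic to a torus $\mathcal T_c$). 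That stronger conclusion is not needed for Proposition~\ref{Prop.Lin} itself but is used conceptually later in the paper.
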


Proposition \ref{Prop.Lin} is  a direct consequence of  the next result.  Define the tori  by
$$
\mathcal T_c:=\big\{(a_k,b_k)_{k\geq 1}\mid (\lambda_{n}^2-m^2)a^2_n+b_n^2=c_n^2\;\big\},\quad c\in \l^2(\N,\R),
$$
then we have

 \begin{theo}\ph\label{thm3}
 Let $(a_{0}(0),b_{0}(0))\in \mathcal{K}_{0} $ and $(a_{n}(0),b_{n}(0))_{n\geq 1}\in h^{1}\times \ell^{2}$. Then the corresponding solution of \eqref{toy1}, \eqref{toy2}    is homoclinic to $\mathcal T_c$ for some  $c\in \l^2(\N,\R)$.  
  \end{theo}
   This result can also be interpreted as a linear scattering result for the system \eqref{toy2}. The infinite dimensional system \eqref{toy1}, \eqref{toy2}  has homoclinic orbits to invariant tori of arbitrary large dimension (finite or infinite). These tori correspond to the case $a_0=b_0=0$ which is stable by~\eqref{toy1}. In that case \eqref{toy2} becomes the standard harmonic oscillator in infinite dimension with frequency vector  $\omega(m)=(\omega_n)_{n\geq 1}$ and $\omega_n=\sqrt{\lambda_{n}^2-m^2}$. 

    \proof
Take the homoclinic $(\alpha,\beta)$  in the plane  $(a_0,b_0)$,  and observe that   $\dis \int_\R \alpha^{2p}(t)\text{d}t<\infty$. Now, for $n\geq 1$ we introduce the norms on $\R^2$, $\dis |x,y|_{n}=\big((\lambda_{n}^2-m^2)x^2+y^2\big)^{1/2}$ and the norm on $h^1\times\l^2$ $\dis \big\|(a_{n},b_{n})_{n\geq 1}\big\|=\big(\sum_{n\geq 1}|a_{n},b_{n}|^{2}_{n}\big)^{1/2}$. 
Then for all $n\geq 1$, we deduce from Lemma \ref{A2} that the trajectory $(a_{n},b_{n})$ satisfies $\dis |a_{n},b_{n}|_{n}\leq C|a_{n}(0),b_{n}(0)|_{n}$. Next, we denote by $S$ the flow of the linear part of \eqref{toy2}, {\it i.e.} with $a_{0}=0$. Then the solution of \eqref{toy2} reads
\begin{equation}\label{duha}
\left(\begin{array}{c}
a_{n}(t)\\b_{n}(t)\end{array}\right) =S(t)\left(\begin{array}{c}
a_{n}(0)\\b_{n}(0)\end{array}\right)-(2p+1)\int_0^t S(t-s)\left(\begin{array}{c}
0\\ \a^{2p}(s)a_{n}(s)\end{array}\right)\text{d}s.
\end{equation}
By construction $S$ preserves the norm $\big |\;.\;\big|_{n}$ and thus   
\begin{eqnarray}
\int_0^{+\infty} \Big|S(-s)\left(\begin{array}{c}
0\\ \a^{2p}(s)a_{n}(s)\end{array}\right)\Big|_{n}\text{d}s &=&\int_0^{+\infty} \a^{2p}(s)|a_{n}(s)|\text{d}s\nonumber\\
&\leq & C|a_{n}(0),b_{n}(0)|_{n}\label{integral},
\end{eqnarray}
where we used the boundedness of $a_{n}$ and the integrability of $\a^{2p}$. Denote by 
$$
\left(\begin{array}{c}
a^{+}_{n}\\b^{+}_{n}\end{array}\right) =\left(\begin{array}{c}
a_{n}(0)\\b_{n}(0)\end{array}\right)-(2p+1)\int_0^{+\infty}S(-s)\left(\begin{array}{c}
0\\ \a^{2p}(s)a_{n}(s)\end{array}\right)\text{d}s,$$
therefore by \eqref{duha} we get 
\begin{eqnarray*} 
\Big|\left(\begin{array}{c}
a_{n}(t)\\b_{n}(t)\end{array}\right) -S(t)\left(\begin{array}{c}
a^{+}_{n}\\b^{+}_{n}\end{array}\right)\Big|_{n}&\leq&(2p+1) \int_t^{+\infty} \a^{2p}(s)|a_{n}(s)|\text{d}s\\
&\leq &C |a_{n}(0),b_{n}(0)|_{n} \int_t^{+\infty} \a^{2p}(s)\text{d}s,
\end{eqnarray*}
which in turn implies
\begin{equation*}
\Big\|\left(\begin{array}{c}
a_{n}(t)\\b_{n}(t)\end{array}\right) -S(t)\left(\begin{array}{c}
a^{+}_{n}\\b^{+}_{n}\end{array}\right)\Big\| \longrightarrow 0, \quad \text{when}\;\;t\longrightarrow +\infty.
\end{equation*}
Finally we remark that the trajectory $t\mapsto S(t)\left(\begin{array}{c}
a^{+}_{n}\\b^{+}_{n}\end{array}\right)$ lives on the torus $\mathcal T_c $ with $ c_n^2=(\lambda_{n}^2-m^2)(a^+_n)^2+(b_n^+)^2$ for $n\in\N$.
 \endproof
 
 \begin{rema}
If $\om(m)$ is non resonant (for example when $M=\S^{1}$, this happens for a generic  choice of the mass $m$, see \cite{BG06}), $S(t)\big(a^{+}_{n},b_{n}^{+}\big)_{t\geq 0}$ densely covers the torus $\mathcal{T}_{c}$. 
\end{rema}
 \begin{rema} Notice that in the proof of Theorem \ref{thm3} we crucially use that $a_0=\alpha$, {\it i.e.} that we  linearize around the homoclinic orbit. 
A natural question is to ask whether this result still holds if $(a_{0},b_{0})$ is any periodic solution of \eqref{toy1}. In that case $\int_\R a_0^{2p}(t) \text{d}t= +\infty$ and Lemma \ref{A2} does not apply. In particular it is not clear at all that the trajectories remain bounded. This seems to be a difficult and interesting problem related to the reducibility of \eqref{linearwave} when $V$ is not a priori small (the case $V$ small and quasi-periodic in time can be solved by a KAM approach, see \cite{EK, GT} for the last results in the Schr\"odinger case).
\end{rema}

Denote by $\L^{t}_{h}:= \L^{t}_{|\X_{h}}=\left(\begin{array}{cc}
0& 1\\m^{2}-(2p+1)\a^{2p}(t)& 0\end{array}\right)  $ the restriction of $\L^{t}$ to $\X_{h}$. We can identify the space $\X_{h}=\R^{2}$ and we denote by $\<\,,\,\>$ the Euclidian scalar product. Then 
\begin{lemm}\ph \label{LemLin}
The equation 
\begin{equation*}
\partial_{t}Z_{h}=\L^{t}_{h}Z_{h},
\end{equation*}
admits two solutions $\s(t)$ and $\rho(t)$ defined on $\R_{+}$ so that
\begin{equation}\label{init}
\s(0)=\left(\begin{array}{cc}
0 \\ 1 \end{array}\right), \quad \rho(0)=\left(\begin{array}{cc}
1 \\ 0 \end{array}\right),
\end{equation}
(see Figure \ref{Linearizeh}) and
\begin{equation}\label{BB1}
|\s(t)|\leq C \e^{-mt},\quad |\rho(t)|\leq C \e^{mt},\quad \text{for all}\quad  t\geq 0.
\end{equation}
The dual basis $\{\s^{\star},\rho^{\star}\}$ to $\{\s,\rho\}$ in $\X_{h}$ satisfies
\begin{equation}\label{BB2}
|\s^{\star}(t)|\leq C \e^{mt},\quad |\rho^{\star}(t)|\leq C \e^{-mt},\quad \text{for all}\quad  t\geq 0.
\end{equation}
\end{lemm}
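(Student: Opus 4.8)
The idea is to construct $\sigma$ and $\rho$ via an invariant-manifold / contraction argument for the scalar second-order ODE $\ddot z = (m^2 - (2p+1)\alpha^{2p}(t))z$, treating the term $V(t)z := (2p+1)\alpha^{2p}(t)z$ as a perturbation of the constant-coefficient equation $\ddot z = m^2 z$, whose fundamental solutions are $\e^{\pm mt}$. Since $|\alpha(t)|\leq C\e^{-m|t|}$ by \eqref{ab}, we have $\alpha^{2p}(t)\leq C\e^{-2pm|t|}$, so $V\in L^1(\R_+)$ with exponentially small tails; this is what makes the perturbation arguments converge. First I would build the decaying solution $\sigma$. Write the candidate as $\sigma(t)=\e^{-mt}w(t)$ and set up the integral (Volterra/Duhamel) equation, using variation of constants against the basis $\{\e^{-mt},\e^{mt}\}$,
\begin{equation*}
\sigma(t)=\begin{pmatrix} 0\\1\end{pmatrix}\e^{-mt}-\frac{1}{2m}\int_t^{+\infty}\bigl(\e^{-m(t-s)}-\e^{m(t-s)}\bigr)V(s)\begin{pmatrix} \sigma_1(s)\\ \ast\end{pmatrix}\,\dd s
\end{equation*}
(choosing the lower limit $+\infty$ precisely so the growing mode is killed), and solve it by a fixed point in the Banach space of continuous functions with finite norm $\sup_{t\geq 0}\e^{mt}|\sigma(t)|$; the weighted contraction works because $\int_t^{+\infty}\e^{2m(s-t)}\e^{-m s}V(s)\,\dd s$ is controlled by $\int_t^{+\infty}\e^{-2pms}\,\dd s\to 0$. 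This gives $\sigma$ with $|\sigma(t)|\leq C\e^{-mt}$; to get the normalization $\sigma(0)=(0,1)^{T}$ exactly I would instead prescribe the initial data at $t=0$ and note that among the one-dimensional family of solutions decaying like $\e^{-mt}$ there is a unique one hitting $(0,1)$ up to a scalar — alternatively, after constructing any nonzero decaying solution, rescale and/or add a suitable multiple of it to adjust, using that the decaying subspace is one-dimensional.

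Next I would produce $\rho$ with $\rho(0)=(1,0)^{T}$: take $\rho(0)=(1,0)^{T}$ as Cauchy data and let $\rho$ be the corresponding solution of $\partial_t Z_h=\L^t_h Z_h$ on $\R_+$; the bound $|\rho(t)|\leq C\e^{mt}$ follows from a Gronwall estimate on $\e^{-mt}|\rho(t)|$ after writing $\rho$ in Duhamel form against $\{\e^{\pm mt}\}$, again using $V\in L^1$. One must check $\{\sigma,\rho\}$ is a genuine basis of $\X_h=\R^2$, i.e. they are linearly independent: this is immediate at $t=0$ since $\sigma(0)=(0,1)^{T}$ and $\rho(0)=(1,0)^{T}$ are independent, and the Wronskian of two solutions of a second-order linear ODE is constant (the system is trace-free), hence nonzero for all $t$. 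Finally, for the dual basis $\{\sigma^\star,\rho^\star\}$ defined by $\langle\sigma^\star,\sigma\rangle=1$, $\langle\sigma^\star,\rho\rangle=0$, etc., I would write it explicitly via the $2\times2$ inverse: if $G(t)$ is the matrix with columns $\sigma(t),\rho(t)$, then $(\sigma^\star(t),\rho^\star(t))^{T}=G(t)^{-1}$, whose entries are the cofactors of $G(t)$ divided by $\det G(t)$; since $\det G$ is a nonzero constant (constant Wronskian, value $-1$ by the initial conditions), the entries of $G^{-1}$ are, up to sign, just the entries of $G$, giving $|\sigma^\star(t)|\leq C\e^{mt}$ and $|\rho^\star(t)|\leq C\e^{-mt}$ directly from \eqref{BB1}.

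The routine parts are the two Gronwall/contraction estimates; the one genuinely delicate point is matching the prescribed initial data \eqref{init} with the correct exponential behavior, since a priori the fixed-point construction produces a decaying solution only up to scalar multiples and with an unspecified value at $t=0$. I would handle this by exploiting that the solution space is two-dimensional, the decaying (stable) directions form a line, and any solution not in that line grows like $\e^{mt}$; choosing $\rho$ with data $(1,0)$ and then selecting the unique (normalized) decaying solution $\sigma$ hitting $(0,1)$ is then forced, and the constant Wronskian argument certifies both the basis property and, for free, the dual-basis bounds. Note that one should also invoke the appendix lemma (Lemma \ref{A2}) if a cleaner statement of the perturbed-exponential-dichotomy estimate is available there, rather than re-deriving the Gronwall bounds by hand.
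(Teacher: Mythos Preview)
Your perturbation/Duhamel route is different from the paper's and would yield the growth bounds \eqref{BB1}, but there is a genuine gap precisely at the point you flag as delicate: the normalization $\sigma(0)=(0,1)^{T}$. The decaying solutions form a one-dimensional subspace, so at $t=0$ they all lie on a \emph{single} line in $\R^2$; you cannot ``select'' one to hit $(0,1)^{T}$ unless you first prove that this line is $\{0\}\times\R$. Your fixed-point construction produces a decaying solution whose value at $0$ is $(0,1)^{T}$ plus an integral correction, and nothing in the perturbation argument forces the first component of that correction to vanish. This is not a generic fact about integrable perturbations of $\ddot z=m^2 z$: it encodes that the homoclinic $\alpha$ is even and attains its maximum at $t=0$.

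The paper sidesteps this by observing directly that $t\mapsto(\dot\alpha(t),\ddot\alpha(t))$ solves $\partial_t Z_h=\L^t_h Z_h$ (just differentiate $\ddot\alpha=m^2\alpha-\alpha^{2p+1}$). Since $\alpha$ is even, $\dot\alpha(0)=0$ while $\ddot\alpha(0)\neq 0$, so a scalar multiple gives $\sigma(0)=(0,1)^{T}$, and the decay bound comes for free from \eqref{ab}. The second solution is then obtained by reduction of order: writing $\gamma=\dot\alpha\,z$ leads to $\dot\alpha\,\ddot z+2\ddot\alpha\,\dot z=0$, hence $z(t)\sim\e^{2mt}$ and $|\gamma(t)|\leq C\e^{mt}$; adding a suitable multiple of $\sigma$ (which does not spoil the $\e^{mt}$ bound) fixes $\rho(0)=(1,0)^{T}$. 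Your dual-basis argument via the constant Wronskian is correct and matches the paper's. If you want to keep your approach, the missing ingredient is exactly the observation that the tangent vector to the homoclinic, $(\dot\alpha,\ddot\alpha)$, is the decaying solution; once you have that, your Duhamel estimates become unnecessary for $\sigma$ and are only needed (or replaced by reduction of order) for $\rho$.
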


\begin{figure}[h!]
\centering 
\def\svgwidth{100mm}
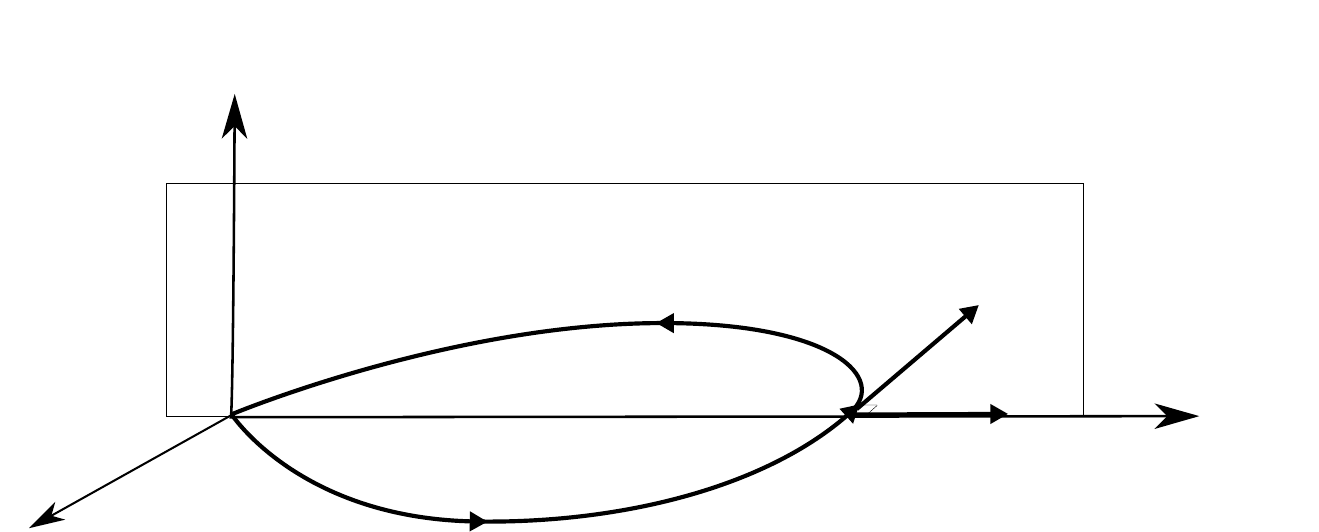
\caption{Linearized system at $t=0$ and reversibility plane.}
\label{Linearizeh}
\end{figure}

\begin{proof}
Let $\mu\in \R$, then $\dis t\mapsto \mu \left(\begin{array}{cc}
\dot{\a}(t)\\ \ddot{\a}(t)\end{array}\right)=\mu\left(\begin{array}{cc}
\b(t)\\ \dot{\b}(t)\end{array}\right)$ is a solution of $\partial_{t}Z_{h}=\L^{t}_{h}Z_{h}$. Then in view of \eqref{def.homo} we  deduce that this solution corresponds to $\s$ and we obtain the first bound in \eqref{BB1}. With a suitable choice of $\mu$ we get \eqref{init}.

To find $\rho$, we write   $\dis \rho= \left(\begin{array}{cc}
{\gamma}\\ \dot{\gamma}\end{array}\right)$, and look for $\gamma$ of the form $\gamma=\dot{\a}z$, when $t\geq 1$. Then $z$ satisfies the equation $\dot{\a}\ddot{z}+2\ddot{\a}\dot{z}=0$ and therefore 
\begin{equation*}
z(t)=z(1)+\dot{z}(1)(\dot{\a}(1)\big)^{2}\int_{1}^{t}\big(\dot{\a}(\tau)\big)^{-2}\text{d}\tau,\quad \forall t\geq 1.
\end{equation*}
It is then straightforward to check that $\gamma=\dot{\a}z$ can be extended to a $\mathcal{C}^{\infty}$ function on $\R$.
Then the bound on $\s$  gives $|z(t)|\leq C\e^{2mt}$ and $|\dot{z}(t)|\leq C\e^{2mt}$ for all $t\geq 0$, which in turn implies that $|\gamma(t)|\leq C\e^{mt}$ and $|\dot{\gamma}(t)|\leq C\e^{mt}$ for all $t\geq 1$. Hence the second bound in \eqref{BB1}. We can moreover choose the initial value $\rho$ of the form claimed in the statement of the lemma.\ligne

Then we can explicitly compute 
\begin{equation*}
\s^{\star}=\frac1{\b \dot{\gamma}-\gamma \dot{\b}}\left(\begin{array}{cc}
\dot{\gamma}\\ -{\gamma}\end{array}\right),\quad \rho^{\star}=\frac1{\b \dot{\gamma}-\gamma \dot{\b}}\left(\begin{array}{cc}
-\dot{\b}\\ {\b}\end{array}\right),
\end{equation*}
 where $\b \dot{\gamma}-\gamma \dot{\b}$ is a non vanishing constant, since it is the Wronskian of the equation 
 $$\dis \ddot{y}-m^{2}y+(2p+1)\a^{2p}y=0.$$
  The bound \eqref{BB2} then follows from \eqref{BB1}.
\end{proof}

\section{Homoclinic orbits to the center manifold}\label{Sect.4}

In this section we prove Theorem \ref{thm1}.
As we mentioned in the introduction, this part closely follows the argument of Groves-Schneider  \cite{GS3}. Nevertheless in contrast to their work, we are not dealing with~$m$ small and thus with small solutions. Even if some results already appear in \cite{GS3}, we reproduce here all the proofs for the convenience of the reader. First we recall the general strategy, illustrated by Figure \ref{Schemastrategie}.
\begin{enumerate}
\item To begin with, we consider a spatially truncated system for which the non-linearity sees only the elliptic (or central) modes of small amplitude. We construct global solutions to this system that are close to the homoclinic orbit, namely the hyperbolic part is close to $h=(\a,\b)$ for all times $t\geq 0$, and the elliptic part is small only for $t\leq \eps^{-1}$ where $\eps$ is a small parameter. (Subsection~\ref{4.2}) \\
In the next steps we will prove that the elliptic part of these solutions actually stay small for all~$t$, $i.e.$ that the set of all these solutions is the global center-stable manifold $W^{cs}$ of the truncated system, and thus a local center-stable manifold for \eqref{kg} at time $t=0$. 
\item We construct a global center manifold $W^{c}$ for the truncated system by constructing the solutions that are close to the origin for all times. 
(Subsection~\ref{4.3})
\item The crucial point consists in linking the two previous steps: we prove that the solutions of the truncated system constructed in step 1 actually tend to some solutions contained in the local center manifold as $t\to +\infty$ and thus remain small for all times $t\geq0$. As the central part remains unconditionally small, we conclude that these solutions are actually solutions of the original system (without truncation). This gives us a global center stable manifold $\tilde{W}^{cs}$ which is parametrized by the initial value $V_s$ of the stable part that we add to $h$ and by $V_c$ the initial value of the elliptic part, both being small. (Subsection \ref{4.4})
\item For the proof of Theorem \ref{coro.thm}, it remains to prove that we can choose $V_s$ and $V_c$ in such a way that the solution hits the reversibility plane $b_n=0$ for all $n$: namely the solution at $t=0$ belongs to the reversibility plane as soon as $V_s=0$, and $V_c$ is symmetric. The corresponding solutions are then automatically symmetric and thus describe heteroclinic connections between two symmetric solutions of the center manifold. (Subsection \ref{4.5})
\end{enumerate}

 \begin{figure}[h!]
\centering 
\def\svgwidth{130mm}
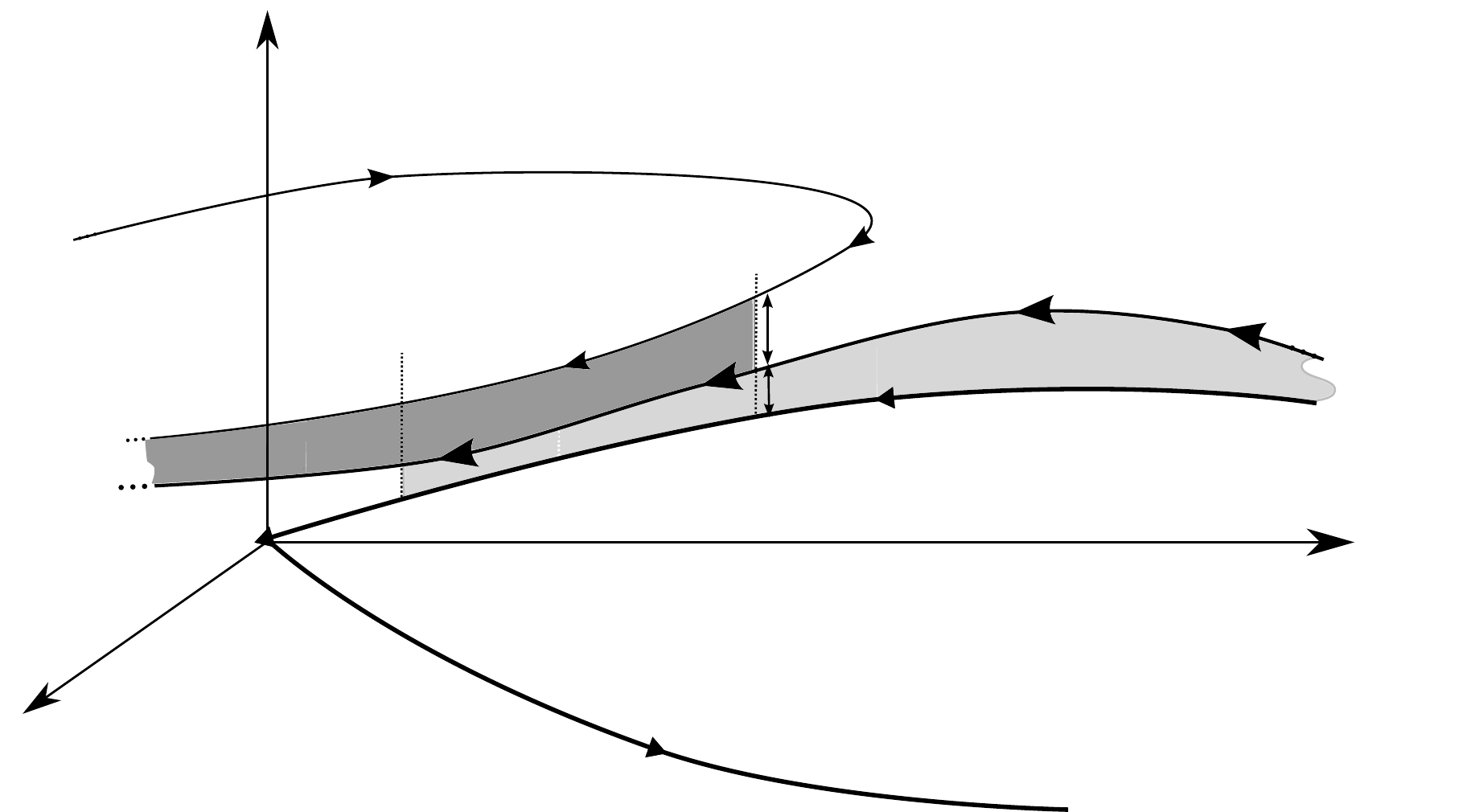
\caption{Schematic overview of the strategy and the notations. We will see later that we take $\delta=\eps^{3/2}$ with $\eps\ll 1$.}
\label{Schemastrategie}
\end{figure}

\subsection{Preliminary results}
To begin with, we  write an  equivalent formulation of \eqref{kg}. Denote by 
\begin{equation} \label{def.F}
X= \left(\begin{array}{cc}
u\\ v\end{array}\right),  \quad   {\Lambda}=
\left(\begin{array}{cc}
0& 1\\ \Delta+m^{2}& 0\end{array}\right)  ,\quad F(X)=\left(\begin{array}{c}
0\\ -u^{2p+1}\end{array}\right).  
\end{equation}
The equation \eqref{kg} is equivalent to  
\begin{equation}\label{eqme}
\partial_{t}X=\Lambda X+F(X).
\end{equation}
By the result of Theorem \ref{thm0}, the equation \eqref{eqme} admits, for a given initial condition $X(0)\in \X$ a unique global solution $X  \in \X$ so that $\dis \sup_{t\in \R}\|X(t)\|\leq C(\|X(0)\|_{\X})$. We fix $C_{0}>0$ large enough, so that all the initial conditions $X(0)$ we consider in the sequel satisfy $\|X(0)\|\leq C_{0}$. Therefore all the solutions  $X$ we are going to consider satisfy
$$\dis \sup_{t\in \R}\|X(t)\|\leq C(C_{0}).$$
We denote  $\D_c$ the projection on $\X_c$ of such initial conditions:
$$\D_c:=\big\{\,Q X\mid \|X\|\leq C_0\,\big\}.$$

Let $\theta\in \mathcal{C}^{\infty}([0,\infty),\R)$  be a cut-off function  so that
\begin{equation*}  
\theta(s)=\left\{
\begin{aligned}
& 1,\quad   s\leq \delta\\
&  0,\quad s\geq 2\delta,
\end{aligned}
\right.
\end{equation*} 
where $\delta$ is a  parameter much  smaller than 1.\\
For any function $G: \X\longrightarrow \R$, denote by 
\begin{equation*}
\underline{G}(X)=G\big(X\theta(\|X_{c}\|)\big).
\end{equation*} 

\begin{lemm}\ph\label{4.0}
Consider the function $F$ given by \eqref{def.F} and let $X,X'\in \mathcal{X}$. 
Assume moreover  that $|X_{h}|,|X'_{h}|\leq \delta<1$. Then there exists a constant $C>0$ such that
\begin{equation}\label{Ftronca0}
\|\underline{{F}}(X)\|\leq  C\delta^{3} , 
\end{equation}
\begin{equation}\label{Ftronca}
\|\underline{{F}}(X)-\underline{{F}}(X')\|\leq  C\delta^{2} \|X-X'\|.
\end{equation}
\end{lemm}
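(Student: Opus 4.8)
The plan is to reduce everything to elementary estimates on the map $X \mapsto X\theta(\|X_c\|)$ and the cubic structure of $F$. First I would record the basic observation that for $X = (u,v)$ we have $\underline{F}(X) = F\bigl(X\theta(\|X_c\|)\bigr) = \bigl(0, -(u\,\theta(\|X_c\|))^{2p+1}\bigr)$, so that $\|\underline{F}(X)\| = \|(u\theta(\|X_c\|))^{2p+1}\|_{L^2}$. Write $w := u\,\theta(\|X_c\|)$. The key point is that when $\theta(\|X_c\|) \neq 0$ we have $\|X_c\| \leq 2\delta$, hence $\|w\|_{H^1} \leq \theta(\|X_c\|)\bigl(\|u_h\|_{H^1} + \|u_c\|_{H^1}\bigr) \leq |X_h| + \|X_c\| \leq \delta + 2\delta = 3\delta$ using the hypothesis $|X_h|\le\delta$; when $\theta(\|X_c\|)=0$ then $w=0$ and the bound is trivial. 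By the Sobolev embeddings of Lemma \ref{lemsobo} (valid for the exponents allowed under Assumption \ref{assumption}, i.e. $q = 2(2p+1)$ in dimensions 1,2 and $q = 6$ for $p=1$ in dimension 3), $\|w^{2p+1}\|_{L^2} = \|w\|_{L^{2(2p+1)}}^{2p+1} \leq C\|w\|_{H^1}^{2p+1} \leq C(3\delta)^{2p+1}$. Since $\delta < 1$ and $2p+1 \geq 3$, this gives $\|\underline{F}(X)\| \leq C\delta^3$, which is \eqref{Ftronca0}.

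For \eqref{Ftronca}, I would set $w = u\,\theta(\|X_c\|)$ and $w' = u'\,\theta(\|X'_c\|)$ and use the algebraic identity $w^{2p+1} - (w')^{2p+1} = (w - w')\sum_{j=0}^{2p} w^j (w')^{2p-j}$. Then $\|\underline{F}(X) - \underline{F}(X')\| = \|w^{2p+1} - (w')^{2p+1}\|_{L^2}$, and by Hölder with exponents chosen so that one factor is measured in $L^{2(2p+1)}$ and the product of the remaining $2p$ factors in the conjugate space, together with the Sobolev bound, one gets
\begin{equation*}
\|\underline{F}(X) - \underline{F}(X')\| \leq C\,\|w - w'\|_{H^1}\bigl(\|w\|_{H^1} + \|w'\|_{H^1}\bigr)^{2p} \leq C\delta^{2p}\,\|w - w'\|_{H^1},
\end{equation*}
again using $\|w\|_{H^1}, \|w'\|_{H^1} \leq 3\delta$. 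Since $2p \geq 2$ and $\delta < 1$ we have $\delta^{2p} \leq \delta^2$, so it remains to control $\|w - w'\|_{H^1}$ by $C\|X - X'\|$. Here $w - w' = u\,\theta(\|X_c\|) - u'\,\theta(\|X'_c\|) = (u - u')\theta(\|X_c\|) + u'\bigl(\theta(\|X_c\|) - \theta(\|X'_c\|)\bigr)$; the first term is bounded by $\|u - u'\|_{H^1} \leq \|X - X'\|$, and for the second I would use that $\theta$ is Lipschitz, so $|\theta(\|X_c\|) - \theta(\|X'_c\|)| \leq \|\theta'\|_\infty\,\bigl|\,\|X_c\| - \|X'_c\|\,\bigr| \leq C\|X_c - X'_c\| \leq C\|X - X'\|$, while $\|u'\|_{H^1}$ is bounded — either $\theta(\|X'_c\|) - \theta(\|X_c\|) = 0$ (nothing to prove), or one of $\|X_c\|,\|X'_c\|$ is $\leq 2\delta$ and then by continuity of $\theta$ and the Lipschitz estimate the factor $u'$ only matters on the region $\|X'_c\| \leq C\delta$, so $\|u'\|_{H^1} \leq |X'_h| + \|X'_c\| \leq C\delta \leq C$. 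Combining, $\|w - w'\|_{H^1} \leq C\|X - X'\|$, which finishes \eqref{Ftronca}.

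The main obstacle I anticipate is the bookkeeping in the second term of $w - w'$: one must be careful that $u'$ is only measured where the difference of cut-offs is nonzero, so that the a priori bound $|X'_h| \leq \delta$ plus $\|X'_c\| \lesssim \delta$ can be invoked; a sloppy argument would try to bound $\|u'\|_{H^1}$ globally, which is only $\leq C_0$ and would lose the needed smallness (though in fact a constant bound $C$ suffices here since we only need $\delta^2$, not $\delta^3$, in \eqref{Ftronca}, so this is less delicate than it first appears). The other minor point to get right is checking that the Sobolev exponent $2(2p+1)$ is admissible: in dimensions 1 and 2 this is fine for every $p$, and in dimension 3 Assumption \ref{assumption} restricts to $p = 1$, giving exponent $6$, which is exactly the endpoint allowed by Lemma \ref{lemsobo}. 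Everything else is a routine application of Hölder and the triangle inequality.
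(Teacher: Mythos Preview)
Your proof is correct and, for \eqref{Ftronca}, arguably cleaner than the paper's. For \eqref{Ftronca0} you do exactly what the paper does: split $u = a_0 + U$, use $|a_0|\le\delta$, use that $\theta\ne 0$ forces $\|U\|_{H^1}\le 2\delta$, and invoke the Sobolev embedding $H^1\hookrightarrow L^{2(2p+1)}$. For \eqref{Ftronca} the paper instead writes the pointwise inequality $|u^{2p+1}-(u')^{2p+1}|\le C\delta^{2p}|u-u'|$ under the hypothesis $|u|,|u'|\le C\delta$ and integrates; this is only justified when $H^1\hookrightarrow L^\infty$, i.e.\ in dimension~1. Your H\"older/Sobolev route works uniformly under Assumption~\ref{assumption}, so it actually covers the $2$- and $3$-dimensional cases the paper's argument glosses over.

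One point to tighten: your parenthetical remark that ``a constant bound $C$ suffices'' on $\|u'\|_{H^1}$ is not right when $p=1$. Since $\theta$ transitions from $1$ to $0$ on an interval of length $\delta$, one has $\|\theta'\|_\infty\sim\delta^{-1}$, so a bound $\|u'\|_{H^1}\le C$ would only yield $\delta^{2p}\cdot C\delta^{-1}\|X-X'\|=C\delta^{2p-1}\|X-X'\|$, which is $C\delta\|X-X'\|$ for $p=1$. You therefore genuinely need $\|u'\|_{H^1}\le C\delta$ in the second term, and your main argument does deliver this --- but it needs a short case split rather than the vague ``only matters on the region'' phrasing. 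If $\|X'_c\|\le 2\delta$ then $\|u'\|_{H^1}\le 3\delta$ directly and the $\delta^{-1}$ cancels. If $\|X'_c\|>2\delta$ then $w'=0$, so $w-w'=u\,\theta(\|X_c\|)$ with $\|u\|_{H^1}\le 3\delta$; and since $\theta(\|X'_c\|)=0$ one has $\theta(\|X_c\|)=|\theta(\|X_c\|)-\theta(\|X'_c\|)|\le C\delta^{-1}\|X-X'\|$, giving $\|w-w'\|_{H^1}\le C\|X-X'\|$ again. Either way the Lipschitz constant of $X\mapsto X\theta(\|X_c\|)$ is uniform in $\delta$, which is the fact both you and the paper need but neither spells out completely.
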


\begin{proof}

Writing $X=\left(\begin{array}{cc}
u\\ v\end{array}\right)$ and $u=a+U$ with $a\in \X_h$ and $U\in \X_c$ we have by the fact that $|a|\leq \delta$ and the Sobolev embedding 
\begin{eqnarray*}
\|\underline{{F}}(X)\|&\leq & \big\|\big(|a|+|U|\theta(\|U\|_{H^{1}})\big)^{2p+1}\big\|_{L^{2}(M)}\\
&\leq & C\delta^{2p+1}+C \|U\|^{2p+1}_{L^{2(2p+1)}}\theta(\|U\|_{H^{1}})\\
&\leq & C\delta^{2p+1}+C \|U\|^{2p+1}_{H^{1}}\theta(\|U\|_{H^{1}}).
\end{eqnarray*}
This gives  \eqref{Ftronca0}, since $\|U\|_{H^1}  \theta(\|U\|_{H^{1}})\leq C\delta$.\\
To get \eqref{Ftronca}, just remark that for $|u|,|u'|\leq C\delta$
$$
\Big\|F\left(\begin{array}{cc}
u\\ v\end{array}\right)-F\left(\begin{array}{cc}
u'\\ v'\end{array}\right)\Big\|^2\leq \int_{M}\big|u^{2p+1}-u'^{2p+1}\big|^2\text{d}x\leq  C\delta^4\int_{M}|u-u'|^2\text{d}x,$$
and then apply this inequality to $\left(\begin{array}{cc}
u\\ v\end{array}\right)=X\theta(\|X_{c}\|)$ and $\left(\begin{array}{cc}
u'\\ v'\end{array}\right)=X'\theta(\|X_{c}'\|)$.
\end{proof}

We write the linearisation of \eqref{eqme} around the homoclinic orbit $h=\left(\begin{array}{c}
\a\\ \b\end{array}\right)  $. Set  $X=Z+h$ and define  
\begin{equation}\label{def.N}
\mathcal{N}(Z):=F(Z+h)-F(h)-\text{d}F[h].Z,
\end{equation}
with 
\begin{equation*}
\text{d}F[h]=\left(\begin{array}{cc}
0& 0\\-(2p+1)\a^{2p}(t)& 0\end{array}\right).
\end{equation*}
As a result,  $Z$ satisfies the equation 
\begin{equation}\label{eqli}
\partial_{t}Z=\L^{t} (Z)+\mathcal{N}(Z).
\end{equation}

Similarly as in Lemma \ref{4.0} we have
\begin{lemm}\ph\label{4.1}
Consider the function $\mathcal{N}$ defined in \eqref{def.N} and let $Z, Z'\in\X$  with
 $|Z_{h}|,|Z'_{h}|\leq \delta$ and with  $\|Z+h\|,\|Z'+h\|\leq C_{1}$
\begin{equation}\label{tronca0}
\|\underline{\mathcal{N}}(Z)\|\leq  C\delta^{2}  ,
\end{equation}
\begin{equation*} 
\|\underline{\mathcal{N}}(Z)\|\leq  C  \|Z\|^2 ,
\end{equation*}
\begin{equation}\label{tronca}
\|\underline{\mathcal{N}}(Z)-\underline{\mathcal{N}}(Z')\|\leq  C\delta \|Z-Z'\|.
\end{equation}

\end{lemm}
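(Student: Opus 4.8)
The plan is to prove the three estimates by reducing everything to the algebraic structure of $\mathcal{N}$ and then invoking Sobolev as in Lemma \ref{4.0}. First I would unwind the definition: writing $Z=\left(\begin{smallmatrix}w\\ \dot w\end{smallmatrix}\right)$ with $w=w_h+w_c$, $w_h\in\X_h$, $w_c\in\X_c$, one has from \eqref{def.N} that $\mathcal{N}(Z)$ is the purely algebraic remainder
\[
\mathcal{N}(Z)=\left(\begin{array}{c}0\\ -\big((\a+w)^{2p+1}-\a^{2p+1}-(2p+1)\a^{2p}w\big)\end{array}\right),
\]
so by the Taylor formula with integral remainder $\|\mathcal{N}(Z)\|\le C\big\|\,(|\a|+|w|)^{2p-1}w^{2}\,\big\|_{L^2}$. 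Since $|\a(t)|\le C$ by \eqref{ab} and $\|Z+h\|\le C_1$ gives $\|w\|_{H^1}\le C$, the Sobolev embeddings of Lemma \ref{lemsobo} (in dimensions $1,2$ for all exponents, in dimension $3$ for $p=1$, exactly the range allowed by Assumption \ref{assumption}) yield $\|\mathcal{N}(Z)\|\le C\|w\|_{H^1}^2\le C\|Z\|^2$, which is the second inequality. Here of course one must pass from $\mathcal N$ to $\underline{\mathcal N}(Z)=\mathcal N\big(Z\theta(\|Z_c\|)\big)$; but $Z\theta(\|Z_c\|)$ still has $\|\cdot\|\le C$ and the same hyperbolic part is only rescaled by a factor in $[0,1]$, so the bound is unaffected — this is the only place the truncation enters, and it is harmless.

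For \eqref{tronca0} I would split $w=w_h+w_c$ and use $|w_h|\le\delta$ together with $\|w_c\|\,\theta(\|w_c\|)\le C\delta$ (the defining property of $\theta$, already used at the end of the proof of Lemma \ref{4.0}). Expanding $(\a+w_h+w_c)^{2p+1}-\a^{2p+1}-(2p+1)\a^{2p}(w_h+w_c)$, every surviving monomial contains at least two factors among $\{w_h,w_c\}$ (the zeroth- and first-order terms in $w$ cancel by construction), hence is bounded in $L^2$ by $C\delta^2$ after applying Sobolev to the $w_c$-factors and $|w_h|\le\delta$ to the $w_h$-factors; the $\a$-factors contribute only the constant $C$ from \eqref{ab}. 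Summing the finitely many monomials gives $\|\underline{\mathcal N}(Z)\|\le C\delta^2$.

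For the Lipschitz bound \eqref{tronca} I would write $\underline{\mathcal N}(Z)-\underline{\mathcal N}(Z')=F(Z\theta-)-F(Z'\theta'-)-\mathrm dF[h].(Z-Z')$ — actually it is cleaner to note that $\mathcal N(Z)-\mathcal N(Z')=\big(F(Z+h)-F(Z'+h)\big)-\mathrm dF[h].(Z-Z')$ and that $\mathrm dF[h].(Z-Z')=\left(\begin{smallmatrix}0\\ -(2p+1)\a^{2p}(w-w')\end{smallmatrix}\right)$ while $F(Z+h)-F(Z'+h)=\left(\begin{smallmatrix}0\\ -\big((\a+w)^{2p+1}-(\a+w')^{2p+1}\big)\end{smallmatrix}\right)$; combining, the difference equals $-\int_0^1\big(\mathrm dF[\,h+tZ'+t(Z-Z')\,]-\mathrm dF[h]\big)\,dt\,.(Z-Z')$ up to lower-order, so it is controlled by $C\sup_{t\in[0,1]}\big\|(\a+\text{interpolant})^{2p}-\a^{2p}\big\|$ acting on $w-w'$. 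Since along the truncated interpolation the hyperbolic part has size $\le\delta$ and the elliptic part size $\le C\delta$, the bracket is $O(\delta)$ pointwise, and Sobolev turns this into $\|\underline{\mathcal N}(Z)-\underline{\mathcal N}(Z')\|\le C\delta\|Z-Z'\|$. One also needs the Lipschitz continuity of $Z\mapsto Z\theta(\|Z_c\|)$, which holds with constant $\le C$ because $\theta$ is smooth and compactly supported; this is routine and identical to the corresponding step in Lemma \ref{4.0}. The only mildly delicate point — and the place I would be most careful — is keeping track of which factors are $O(\delta)$ and which are merely $O(1)$ in each monomial of the multinomial expansion, so that no estimate accidentally loses a power of $\delta$; but this is bookkeeping, not a genuine obstacle.
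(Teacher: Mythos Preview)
Your argument is correct and is exactly the approach the paper intends: the authors give no separate proof of Lemma~\ref{4.1}, writing only ``Similarly as in Lemma~\ref{4.0}'', and your multinomial expansion of $(\alpha+w)^{2p+1}$ together with Sobolev control of the $w_c$-factors is precisely that scheme applied to the second-order remainder $\mathcal N$. Two small points to tighten: in your mean-value formula the interpolation point should be $h+Z'+t(Z-Z')$ rather than $h+tZ'+t(Z-Z')$; and the phrase ``the bracket is $O(\delta)$ pointwise'' is only literally valid in dimension~$1$ --- in dimensions~$2,3$ you must, as you then indicate, use H\"older and Sobolev instead of a pointwise bound, and for the Lipschitz constant of $Z\mapsto Z\theta(\|Z_c\|)$ it is the hypothesis $|Z_h'|\le\delta$ (together with $\|Z_c\|\,|\theta'(\|Z_c\|)|\le C$), not just smoothness of $\theta$, that keeps this constant $O(1)$ rather than $O(\delta^{-1})$.
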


\subsection{The local center-stable manifold}\label{4.2}

The equation \eqref{eqli} is equivalent to the system
\begin{equation}\label{syst.Z}  
\left\{
\begin{aligned}
& \partial^{}_t Z_{h}=\L^{t}_{h}Z_{h}+P\mathcal{N}(Z),\\
&  \partial^{}_t Z_{c}=\L^{t}_{c}Z_{c}+Q\mathcal{N}(Z).
\end{aligned}
\right.
\end{equation} 
We will be interested in solutions to \eqref{syst.Z} for which the center part is small, that's why we introduce the following truncated system
\begin{equation} \label{approx}
\left\{
\begin{aligned}
& \partial^{}_t Z_{h}=\L^{t}_{h}Z_{h}+P\un(Z),\\
&  \partial^{}_t Z_{c}=\L^{t}_{c}Z_{c}+Q\un(Z).
\end{aligned}
\right.
\end{equation} 
We now look for  solutions to \eqref{approx} with small hyperbolic part. 
Fix $0<r<m$ and denote by 
\begin{equation*}
\mathcal{E}^{+}_{r}=\big\{Z\in \mathcal{C}\big(\R_{+},\X\big),\; \|Z\|_{r}:=\sup_{t\geq 0} \e^{-rt}\|Z(t)\|<\infty\big\}.
\end{equation*}
We introduce a new small parameter $\eps$ which will quantify the size of  $Z$, namely $\| Z\|\leq C\eps^2$. We will choose $\eps=\delta^{2/3}$ in such way the truncation $\theta$ can be removed.   
\begin{prop}\ph\label{prop.Z}
Let $V_{c}\in \X_c$ with $\|V_{c}\|\leq \eps^{2}$ and  let $V_s\in\R$ with $|V_{s}|\leq \eps^{2}$. Then there exists a unique solution $Z\equiv Z_{V_c,V_s}$ to \eqref{approx} in 
\begin{equation*}
\mathcal{B}^{+}_{\delta}=\big\{Z\in \mathcal{E}^{+}_{r},\;  \sup_{t\geq 0}|Z_{h}(t)|\leq \delta\big\},
\end{equation*} 
and so that 
\begin{equation}\label{CI}
\<Z_{h}(0),\s^{\star}(0)\>=V_{s},\quad Z_c(0)=V_{c}.
\end{equation}
Moreover, this solution satisfies
\begin{equation}\label{bond}
 \sup_{t\geq 0}|Z_{h}(t)|\leq C\eps^{2},
\end{equation}
and
\begin{equation}\label{petit}
\|Z_{c}\|\leq C\eps^{2},\quad \text{for all}\quad 0\leq t\leq \eps^{-1}.
\end{equation}
\end{prop}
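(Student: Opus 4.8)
The idea is a fixed-point argument for the system \eqref{approx} in the complete metric space $\mathcal{B}^{+}_{\delta}$, using Duhamel's formula relative to the splitting $\X=\X_{h}\oplus\X_{c}$. First I would set up the integral formulation. On the center part we have the uniformly bounded two-parameter group $K(t,\tau)$ from Proposition \ref{Prop.Lin}, so the natural Duhamel representation of $Z_{c}$ is
\begin{equation*}
Z_{c}(t)=K(t,0)V_{c}+\int_{0}^{t}K(t,\tau)\,Q\,\un(Z)(\tau)\,\dd\tau.
\end{equation*}
On the hyperbolic part we use the basis $\{\sigma,\rho\}$ of Lemma \ref{LemLin}: $\rho$ grows like $\e^{mt}$ and $\sigma$ decays like $\e^{-mt}$, so a bounded-as-$t\to+\infty$ solution must have no $\rho$-component coming from $+\infty$. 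Concretely I would write, with $g(\tau):=P\un(Z)(\tau)$ decomposed in the dual basis $\{\sigma^{\star},\rho^{\star}\}$,
\begin{equation*}
Z_{h}(t)=V_{s}\,\sigma(t)+\Big(\int_{0}^{t}\<g(\tau),\rho^{\star}(\tau)\>\dd\tau\Big)\sigma(t)-\Big(\int_{t}^{+\infty}\<g(\tau),\sigma^{\star}(\tau)\>\dd\tau\Big)\rho(t),
\end{equation*}
where the choice of integrating the $\rho$-coefficient from $+\infty$ is exactly what kills the unstable growth, and one checks directly that at $t=0$ this gives $\<Z_{h}(0),\sigma^{\star}(0)\>=V_{s}$, so \eqref{CI} holds. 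Call $\Phi(Z)=(Z_{h},Z_{c})$ the resulting map.

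Next I would estimate $\Phi$ on $\mathcal{B}^{+}_{\delta}$. For the center part, boundedness of $K$ and the bound $\|\un(Z)\|\leq C\delta^{2}$ from \eqref{tronca0} give $\e^{-rt}\|Z_{c}(t)\|\leq C\eps^{2}+C\delta^{2}\int_{0}^{t}\e^{-r(t-\tau)}\dd\tau\leq C\eps^{2}+C\delta^{2}/r$; since $\delta=\eps^{3/2}$ this is $\leq C\eps^{2}$ provided $\eps$ is small. For the hyperbolic part, using $|\sigma(t)|\leq C\e^{-mt}$, $|\rho(t)|\leq C\e^{mt}$, $|\sigma^{\star}(t)|\leq C\e^{mt}$, $|\rho^{\star}(t)|\leq C\e^{-mt}$ and again $\|\un(Z)\|\leq C\delta^{2}$, the first integral contributes $\leq C\e^{-mt}\int_{0}^{t}\e^{-m\tau}\delta^{2}\dd\tau\leq C\delta^{2}\e^{-mt}$ and the second $\leq C\e^{mt}\int_{t}^{\infty}\e^{-2m\tau}\delta^{2}\dd\tau\leq C\delta^{2}\e^{-mt}$; with $V_{s}\sigma(t)$ this yields $\sup_{t\geq 0}|Z_{h}(t)|\leq C(\eps^{2}+\delta^{2})\leq C\eps^{2}$, which is both the self-map property into $\mathcal{B}^{+}_{\delta}$ (for $\eps$ small, $C\eps^{2}\leq\delta$) and the bound \eqref{bond}. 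Contraction is the same computation with \eqref{tronca} in place of \eqref{tronca0}: the Lipschitz constant of $\un$ is $C\delta$, so $\Phi$ is a contraction with ratio $\lesssim\delta$, hence has a unique fixed point $Z=Z_{V_{c},V_{s}}$ in $\mathcal{B}^{+}_{\delta}$.

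Finally, for the refined estimate \eqref{petit} I would run a Gronwall/bootstrap argument on the interval $0\leq t\leq\eps^{-1}$. From the center Duhamel formula, $\|Z_{c}(t)\|\leq C\eps^{2}+C\int_{0}^{t}\|\un(Z)(\tau)\|\dd\tau$; but now instead of the crude $\|\un(Z)\|\leq C\delta^{2}$ I would use the quadratic bound $\|\un(Z)\|\leq C\|Z\|^{2}$ from Lemma \ref{4.1} together with $\|Z_{h}(\tau)\|\leq C\eps^{2}$ (already proven) and the running bound on $\|Z_{c}\|$, so that $\|Z(\tau)\|^{2}\leq C\eps^{4}+C\|Z_{c}(\tau)\|^{2}$; feeding this back gives $\|Z_{c}(t)\|\leq C\eps^{2}+C\eps^{4}t+C\int_{0}^{t}\|Z_{c}(\tau)\|^{2}\dd\tau$, and for $t\leq\eps^{-1}$ the term $C\eps^{4}t\leq C\eps^{3}$ is harmless, so a continuity/bootstrap argument (the set of times where $\|Z_{c}\|\leq 2C\eps^{2}$ is open, closed and nonempty in $[0,\eps^{-1}]$) closes and gives \eqref{petit}. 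The main obstacle is bookkeeping the competition between the exponential weight $\e^{-rt}$ (needed so that the unstable direction does not spoil the fixed point) and the linear-in-$t$ growth of the center part: one must choose the scaling $\delta=\eps^{3/2}$ precisely so that $\delta^{2}/r$ and $\delta$ are both $o(\eps^{2})$ and $o(\delta)$ respectively, and one must be careful that the time horizon $\eps^{-1}$ in \eqref{petit} is exactly the one for which the accumulated quadratic error $\eps^{4}t$ stays below the $\eps^{2}$ threshold — beyond that horizon the truncation $\theta$ can no longer be guaranteed to be inactive, which is why the center smallness in this proposition is only claimed up to $t=\eps^{-1}$ and the unconditional smallness is postponed to the later steps of the strategy.
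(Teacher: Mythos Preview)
Your overall strategy---fixed point for a Duhamel map on $\mathcal{B}^{+}_{\delta}$ with the $\rho$-coefficient integrated from $+\infty$---is exactly the paper's. Two points deserve correction.

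\textbf{The hyperbolic Duhamel formula has the dual basis swapped.} If one writes $Z_{h}(t)=a(t)\sigma(t)+b(t)\rho(t)$ and applies variation of constants, then $a'(\tau)=\<g(\tau),\sigma^{\star}(\tau)\>$ and $b'(\tau)=\<g(\tau),\rho^{\star}(\tau)\>$: the coefficient of $\sigma$ is paired with $\sigma^{\star}$, and the coefficient of $\rho$ with $\rho^{\star}$. Your formula has them the other way round. This is not a harmless relabelling: with your pairing the second integral reads $\int_{t}^{+\infty}\<g(\tau),\sigma^{\star}(\tau)\>\,\dd\tau$, and since $|\sigma^{\star}(\tau)|\leq C\e^{m\tau}$ while $|g(\tau)|\leq C\delta^{2}$ only, the integrand grows like $\e^{m\tau}$ and the integral diverges. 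Your subsequent bound ``$\leq C\e^{mt}\int_{t}^{\infty}\e^{-2m\tau}\delta^{2}\,\dd\tau$'' does not follow from your own formula (nor from the correct one, where the integrand is $\e^{-m\tau}$, not $\e^{-2m\tau}$). Once the pairing is fixed, the correct bound for the hyperbolic part is simply
\[
|(\Phi Z)_{h}(t)|\leq C\eps^{2}\e^{-mt}+C\delta^{2}\e^{-mt}\int_{0}^{t}\e^{m\tau}\dd\tau+C\delta^{2}\e^{mt}\int_{t}^{\infty}\e^{-m\tau}\dd\tau\leq C(\eps^{2}+\delta^{2}),
\]
which is what the paper obtains.

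\textbf{The argument for \eqref{petit} is heavier than needed.} You invoke the quadratic bound $\|\un(Z)\|\leq C\|Z\|^{2}$ and run a bootstrap. The paper simply uses the crude estimate $\|\un(Z)\|\leq C\delta^{2}$ in the center Duhamel formula to get $\|Z_{c}(t)\|\leq C(\|V_{c}\|+\delta^{2}t)$, and then observes that for $t\leq\eps^{-1}$ one has $\delta^{2}t\leq\eps^{3}\cdot\eps^{-1}=\eps^{2}$. No Gronwall or continuity argument is required; the choice $\delta=\eps^{3/2}$ is designed precisely so that this one line works.
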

This result means, that for any fixed $\|V_{c}\|\leq \eps^{2}$, $|V_{s}|\leq \eps^{2}$, there exists a unique choice of $V_{u}=\<Z_{h}(0),\rho^{\star}(0)\> \in \R$ so that the corresponding solution has a small hyperbolic part for all times and a central part small for relatively long time (comparing with the time necessary in order that the homoclinic orbit $h$ reaches a neighbourhood of radius $\eps^2$ of the origin). Let us call {\it local center-stable manifold} the set of all the solutions verifying this property:

\begin{defi}
For $\|V_{c}\|\leq \eps^{2}$ and $|V_{s}|\leq \eps^{2}$, denote by $Z_{V_c,V_s}$ the function given by Proposition~\ref{prop.Z} and 
$$X_{V_c,V_s}:=Z_{V_c,V_s}+h.$$
We define $W^{cs}$ by
$$W^{cs}=\bigcup_{\|V_{c}\|\leq\eps^2,|V_s|\leq\eps^2}\big\{ X_{V_{c},V_s}(0)\big\}.$$
Theorem \ref{ThWcs} below will ensure that for $\eps$ sufficiently small, this set is actually the standard center-stable manifold of the equilibrium $0$ for the initial equation \eqref{eqme}. 
\end{defi}

\begin{proof}
We define the map $\mathcal{F} : \mathcal{E}^{+}_{r} \longrightarrow \mathcal{E}^{+}_{r}$
\begin{eqnarray*}
\mathcal{F}Z(t)&=& V_{s}\s(t)+K(t,0)V_{c}+\int_{0}^{t}\<\,P\un{(Z)}(\tau),\s^{\star}(\tau)\,\>\text{d}\tau\,\s(t) \\
&&-\int_{t}^{+\infty}\<\,P\un{(Z)}(\tau),\rho^{\star}(\tau)\,\>\text{d}\tau\,\rho(t)+\int_{0}^{t}K(t,\tau)Q\un{(Z)}(\tau)\text{d}\tau,
\end{eqnarray*}
and show that for $\delta>0$ small enough, $\mathcal{F}$ is a contraction in $\mathcal{B}^{+}_{\delta}$.\\
$\bullet$ To begin with, we show that $\mathcal{F} : \mathcal{B}^{+}_{\delta} \longrightarrow \mathcal{B}^{+}_{\delta}$.  Since $P=\int_{M}$, we have
\begin{equation*}
|\<\,P\un{(Z)},\s^{\star}\,\>|\leq |P\un{(Z)}||\s^{\star}|\leq\|\un{(Z)}\|_{L^{2}(M)}\e^{m\tau}\leq  \|\un{(Z)}\|\e^{m\tau}
\end{equation*}
Let $|Z_{h}|\leq \delta$, then by \eqref{tronca0}, for all $t\geq0$
\begin{eqnarray}\label{dd}
|\big(\mathcal{F}(Z)\big)_{h}(t)|&\leq & C |V_{s}|\e^{-mt} +C\int_{0}^{t}\|\underline{\mathcal{N}}(Z)\|\e^{m\tau}\text{d}\tau \e^{-mt}+C\int_{t}^{+\infty}\|\underline{\mathcal{N}}(Z)\|\e^{-m\tau}\text{d}\tau \e^{mt}\nonumber\\
&\leq &C(\eps^{2}+\delta^{2})\leq C \eps^{2}.
\end{eqnarray}
On the other hand by Proposition \ref{Prop.Lin} and \eqref{tronca0}
\begin{equation}\label{Vc}
\|\big(\mathcal{F}(Z)\big)_{c}(t)\|\leq  C \|V_{c}\|+C\int_{0}^{t}\|\underline{\mathcal{N}}(Z)\|\text{d}\tau\leq C(\|V_{c}\|+\delta^{2}t),
\end{equation}
which in turn implies that $\dis \|\big(\mathcal{F}(Z)\big)_{c}\|_{r}<\infty$.\\
$\bullet$ We now show that $\mathcal{F}$ is a contraction in $\mathcal{B}^{+}_{\delta}$. By Lemma \ref{4.1}
\begin{multline*}
|\big(\mathcal{F}(Z_{1})\big)_{h}(t)-\big(\mathcal{F}(Z_{2})\big)_{h}(t)|\leq \\
\begin{aligned}
&\leq  C\int_{0}^{t}\|\underline{\mathcal{N}}(Z_{1})-\underline{\mathcal{N}}(Z_{2})\|\e^{m\tau}\text{d}\tau \e^{-mt}+C\int_{t}^{+\infty}\|\underline{\mathcal{N}}(Z_{1})-\underline{\mathcal{N}}(Z_{2})\|\e^{-m\tau}\text{d}\tau \e^{mt}\\
&\leq C\delta \e^{-mt}\|Z_{1}-Z_{2}\|_{r}\int_{0}^{t}\e^{(m+r)\tau}\text{d}\tau+C\delta \e^{mt}\|Z_{1}-Z_{2}\|_{r}\int_{t}^{+\infty}\e^{(-m+r)\tau}\text{d}\tau,
\end{aligned}
\end{multline*}
and therefore
\begin{equation}\label{C1}
\|\big(\mathcal{F}(Z_{1})\big)_{h}(t)-\big(\mathcal{F}(Z_{2})\big)_{h}(t)\|_{r}\leq C\delta \|Z_{1}-Z_{2}\|_{r}.
\end{equation}
Similarly
\begin{eqnarray*}
\|\big(\mathcal{F}(Z_{1})\big)_{c}(t)-\big(\mathcal{F}(Z_{2})\big)_{c}(t)\|&\leq & C\int_{0}^{t}\|\underline{\mathcal{N}}(Z_{1})-\underline{\mathcal{N}}(Z_{2})\|\text{d}\tau\\
&\leq &C\delta \|Z_{1}-Z_{2}\|_{r}\int_{0}^{t} \e^{r\tau}\text{d}\tau,
\end{eqnarray*}
and then 
\begin{equation}\label{C2}
\|\big(\mathcal{F}(Z_{1})\big)_{c}(t)-\big(\mathcal{F}(Z_{2})\big)_{c}(t)\|_{r}\leq C\delta \|Z_{1}-Z_{2}\|_{r}.
\end{equation}
Thus,   \eqref{C1} and \eqref{C2} show that $\mathcal{F}$ is a contraction whenever $0<\delta<1$ is small enough, and we can deduce that there exists a unique fixed point $Z\in \mathcal{B}^{+}_{\delta}$. By definition of $\mathcal{F}$, it is clear that this solution satisfies \eqref{CI}.\\
$\bullet$ Finally, in view of the choice $\eps=\delta^{2/3}$, the bound \eqref{bond} comes from \eqref{dd}  and
 \eqref{petit} is a direct consequence of \eqref{Vc}.
\end{proof}

 \subsection{The local center manifold}\label{4.3}
In this subsection we want to construct solutions that remain close to the origin for all time. Actually this will be achieved by constructing a local center manifold.
Again we consider the approximation  of the initial problem \eqref{eqme}, in which we truncate the non-linearity
\begin{equation} \label{approX}
\left\{
\begin{aligned}
& \partial^{}_t X_{h}=\Lambda_{h}X_{h}+P\underline{F}(X),\\
&  \partial^{}_t X_{c}=\Lambda_{c}X_{c}+Q\underline{F}(X).
\end{aligned}
\right.
\end{equation} 
The next result shows that there exist solutions to \eqref{approX} with small  hyperbolic components.  For $0<r<m$ we define

\begin{equation*}
\mathcal{E}_{r}=\big\{X\in \mathcal{C}\big(\R_{+},\X\big),\; \|X\|_{r}:=\sup_{t\in \R}  \e^{-r|t|}\|X(t)\|<\infty\big\}.
\end{equation*}

\begin{prop}\ph\label{prop.X}
Let $V_{c}\in \D_{c}$. Then there exists a unique solution to \eqref{approX} in 
\begin{equation*}
\mathcal{B}_{\delta}=\big\{X\in \mathcal{E}_{r},\;  \sup_{t\in \R}|X_{h}(t)|\leq \delta\big\},
\end{equation*} 
and so that 
\begin{equation}\label{CI*}
 QX(0)=V_{c}.
\end{equation}
Moreover, this solution satisfies
\begin{equation}\label{bond*}
 \sup_{t\in \R}|X_{h}(t)|\leq C\delta^{3}.
\end{equation}
\end{prop}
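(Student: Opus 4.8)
The plan is to construct the solution $X$ to the truncated system \eqref{approX} by a fixed point argument, entirely parallel to the proof of Proposition \ref{prop.Z}, but now working on the full real line $\R$ rather than on $\R_+$, and requiring the hyperbolic part to decay both as $t\to+\infty$ and as $t\to-\infty$. The key structural point is that for a solution close to the origin, the $b_0$-direction of $\L^t_h$ (equivalently of $\Lambda_h$ near $0$) has one unstable mode $\e^{mt}$ and one stable mode $\e^{-mt}$, so to get a bounded hyperbolic part we must kill the unstable mode looking forward in time \emph{and} kill the stable-in-backward-time (i.e.\ growing as $t\to-\infty$) mode. This forces the use of two Duhamel integrals over $[t,+\infty)$ and $(-\infty,t]$ respectively for the two hyperbolic components, and an integral from $-\infty$ (or split at $0$) for the center part handled by the group $K(t,\tau)$ from Proposition \ref{Prop.Lin}.

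Concretely, I would define $\mathcal{G} : \mathcal{E}_r \longrightarrow \mathcal{E}_r$ by a formula of the shape
\begin{equation*}
\mathcal{G}X(t) = K(t,0)V_c + \Big(\text{stable/unstable Duhamel terms for } P\underline{F}(X)\Big) + \int_{\pm\infty}^{t} K(t,\tau)Q\underline{F}(X)(\tau)\,\dd\tau,
\end{equation*}
where the hyperbolic part uses the decay $\e^{-m|t|}$ available in both time directions to close the estimates: looking forward one integrates the unstable component against $\rho^\star$ over $[t,+\infty)$ and the stable component against $\s^\star$ over $[0,t]$ as in Proposition \ref{prop.Z}, and symmetrically for $t<0$ one uses the time-reversed structure (the reversibility symmetry $S$ maps the forward construction to the backward one). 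One then checks, using Lemma \ref{4.0} — in particular $\|\underline{F}(X)\|\leq C\delta^3$ for $|X_h|\leq\delta$ and the Lipschitz bound $\|\underline{F}(X)-\underline{F}(X')\|\leq C\delta^2\|X-X'\|$ — that $\mathcal{G}$ maps $\mathcal{B}_\delta$ into itself and is a contraction for $\delta$ small: the $\delta^3$ bound on $\underline F$ against the exponential weights yields $\sup_t |X_h(t)|\leq C\delta^3$, which is \eqref{bond*}, while the center part stays in $\mathcal{E}_r$ because $K(t,\tau)$ is uniformly bounded (Proposition \ref{Prop.Lin}) and the source term is bounded in time, so $\e^{-r|t|}$ times the Duhamel integral stays finite; the contraction constant is $O(\delta)$, again from Lemma \ref{4.0}. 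The Banach fixed point theorem then gives a unique $X\in\mathcal{B}_\delta$, which by construction is a solution of \eqref{approX} and satisfies $QX(0)=V_c$, i.e.\ \eqref{CI*}.

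The main obstacle, compared with Proposition \ref{prop.Z}, is handling the \emph{center part on the whole line}: on $\R_+$ one simply integrates from $0$, but on $\R$ the center dynamics neither decays nor is there a spectral gap, so one must be careful that the Duhamel integral for $X_c$ does not grow faster than $\e^{r|t|}$. Since $\|Q\underline F(X)\|\leq C\delta^3$ uniformly and $K$ is bounded, the integral $\int_0^t$ grows at most linearly in $|t|$, which is absorbed by the weight $\e^{-r|t|}$ in $\|\cdot\|_r$; the only subtlety is that this does \emph{not} by itself give smallness of $X_c(t)$ for all $t$ — only the bound \eqref{bond*} on the hyperbolic part is claimed here, and the genuine smallness of the center component for all time (turning $W^c$ into a true local center manifold) is deferred to the linking argument of Subsection \ref{4.4}. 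A secondary technical point is to verify that the fixed point is genuinely $C^1$ in $t$ and solves the differential system (not merely the integral equation), which follows from the regularity of $K(t,\tau)$ and continuity of $t\mapsto\underline F(X(t))$, exactly as in the previous proposition.
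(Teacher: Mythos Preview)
Your overall strategy --- a contraction on $\mathcal{B}_\delta$ driven by the bounds of Lemma~\ref{4.0}, with the center Duhamel integral $\int_0^t$ growing at most linearly and hence absorbed by $\e^{-r|t|}$ --- is exactly the paper's. But there is a genuine confusion in your setup that would make the Duhamel formula incorrect as written.

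The system \eqref{approX} is $\partial_t X = \Lambda X + \underline{F}(X)$ with the \emph{constant} operator $\Lambda$, not the time-dependent $\L^t$. Consequently the relevant basis for $\X_h$ is the pair of eigen-solutions $\s_0\e^{-mt}$, $\rho_0\e^{mt}$ of $\partial_t X_h = \Lambda_h X_h$ (the paper takes $\s_0=(1,-m)^T$, $\rho_0=(1,m)^T$), and the center propagator is the one-parameter group $K(t)$ generated by $\Lambda_c$ --- \emph{not} $\s(t),\rho(t)$ from Lemma~\ref{LemLin} nor $K(t,\tau)$ from Proposition~\ref{Prop.Lin}, which pertain to the linearisation around $h$. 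Plugging those into a Duhamel formula would produce a solution of $\partial_t X = \L^t X + \underline{F}(X)$, which is not \eqref{approX}. With the correct constant-coefficient basis there is no need to split at $t=0$ or invoke reversibility: for all $t\in\R$ one writes
\[
(\mathcal{G}X)_h(t) = \int_{-\infty}^{t}\langle P\underline{F}(X)(\tau),\s_0^\star\e^{m\tau}\rangle\,\dd\tau\,\s_0\e^{-mt}
 - \int_{t}^{+\infty}\langle P\underline{F}(X)(\tau),\rho_0^\star\e^{-m\tau}\rangle\,\dd\tau\,\rho_0\e^{mt},
\]
and $\|\underline{F}(X)\|\leq C\delta^3$ gives $|(\mathcal{G}X)_h(t)|\leq C\delta^3$ directly, which is \eqref{bond*}. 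A minor quantitative slip: the Lipschitz constant supplied by Lemma~\ref{4.0} is $C\delta^2$, not $O(\delta)$ (that was the constant for $\underline{\mathcal{N}}$ in Proposition~\ref{prop.Z}), so the contraction constant here is $O(\delta^2)$.
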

The set of these solutions is the center manifold of $0$ for equation \eqref{approX} :
\begin{defi}\ph\label{def46}
For  $V_{c}\in \D_{c}$, denote by $X_{V_{c}}$ the function given by Proposition \ref{prop.X}. We define $W^{c}$, a global center  manifold for  \eqref{approX}  (and a  local center  manifold for \eqref{kg}), by
\begin{equation*}
W^{c}=\bigcup_{V_{c}\in\D_c, t\in\R}\big\{ X_{V_{c}}(t)\big\}.
\end{equation*} 
\end{defi}

\begin{proof}
We proceed similarly as in the proof of Proposition \ref{prop.Z}. Denote by $ \s_0= \left(\begin{array}{cc}
1\\ -m\end{array}\right) $ and  $ \rho_0= \left(\begin{array}{cc}
1\\ m\end{array}\right)$. The dual basis of $\{\s_0, \rho_0\}$ reads $\{\s_0^{\star}, \rho_0^{\star}\}$ with $\s_0^{\star}=-\frac{1}{2m}\left(\begin{array}{cc}
-m\\ 1\end{array}\right)$ and $\rho_0^{\star}=\frac{1}{2m}\left(\begin{array}{cc}
m\\ 1\end{array}\right)$. It is clear that  $\dis (\s_0\e^{-mt},\rho_0\e^{mt})$ form a basis of the solutions of $\partial_{t}X_{h}=\Lambda_{h}X_{h}$. Denote by $K$ the propagator of the equation $\partial_{t}X_{c}=\Lambda_{c}X_{c}$. Then for  $X\in   \mathcal{E}_{r}$ we define 
\begin{align}\begin{split}\label{yes}
\mathcal{G}X(t)=& K(t)V_{c}+\int_{-\infty}^{t}\<\,P\underline{F}(X)(\tau),\s_0^{\star}\e^{m\tau}\,\>\text{d}\tau\,\s_0 \e^{-m t}\\
&-\int_{t}^{+\infty}\<\,P\underline{F}(X)(\tau),\rho_0^{\star}\e^{-m\tau}\,\>\text{d}\tau\,\rho_0\e^{m t}+\int_{0}^{t}K(t-\tau)Q\underline{F}(X)(\tau)\text{d}\tau.
\end{split}\end{align}
It is straightforward to check that any fixed point of $\mathcal{G}$ in  $\mathcal{B}_{\delta}$ satisfies \eqref{approX} and $QX(0)=V_{c}$.\\
$\bullet$ Firstly, we show that $\mathcal{G} : \mathcal{B}_{\delta} \longrightarrow \mathcal{B}_{\delta}$. Let $X\in \mathcal{B}_{\delta}$, then by \eqref{Ftronca0}
\begin{eqnarray}\label{br}
|\big(\mathcal{G}(X)\big)_{h}(t)|&\leq & C\int_{-\infty}^{t}\|\underline{{F}}(X)\|\e^{m\tau}\text{d}\tau \e^{-mt}+C\int_{t}^{+\infty}\|\underline{{F}}(X)\|\e^{-m\tau}\text{d}\tau \e^{mt}\nonumber\\
&\leq &C\delta^{3}.
\end{eqnarray}
Next, by  \eqref{Ftronca0} again 
\begin{equation*}
\|\big(\mathcal{G}(X)\big)_{c}(t)\|\leq  C \|V_{c}\|+C\int_{0}^{t}\|\underline{{F}}(X)\|\text{d}\tau\leq C+C\delta^{3}t,
\end{equation*}
which in turn implies that $\dis \|\big(\mathcal{G}(X)\big)_{c}\|_{r}<\infty$. Hence, with \eqref{br} we get  $\mathcal{G}(X)\in \mathcal{B}_{\delta}$.\\
$\bullet$ In a second time, by \eqref{Ftronca} we can write 
\begin{multline*}
|\big(\mathcal{G}(X_{1})\big)_{h}(t)-\big(\mathcal{G}(X_{2})\big)_{h}(t)|\leq \\
\begin{aligned}
&\leq  C\int_{-\infty}^{t}\|\underline{{F}}(X_{1})-\underline{{F}}(X_{2})\|\e^{m\tau}\text{d}\tau \e^{-mt}+C\int_{t}^{+\infty}\|\underline{{F}}(X_{1})-\underline{{F}}(X_{2})\|\e^{-m\tau}\text{d}\tau \e^{mt}\\
&\leq C\delta^{2} \e^{-mt}\|X_{1}-X_{2}\|_{r}\int_{-\infty}^{t}\e^{m\tau+r|\tau|}\text{d}\tau+C\delta^{2} \e^{mt}\|X_{1}-X_{2}\|_{r}\int_{t}^{+\infty}\e^{-m\tau+r|\tau|}\text{d}\tau,
\end{aligned}
\end{multline*}
and therefore
\begin{equation}\label{C1*}
\|\big(\mathcal{G}(X_{1})\big)_{h}(t)-\big(\mathcal{G}(X_{2})\big)_{h}(t)\|_{r}\leq C\delta^{2} \|X_{1}-X_{2}\|_{r}.
\end{equation}
Similarly
\begin{eqnarray*}
\|\big(\mathcal{G}(X_{1})\big)_{c}(t)-\big(\mathcal{G}(X_{2})\big)_{c}(t)\|&\leq & C\int_{0}^{t}\|\underline{{F}}(X_{1})-\underline{{F}}(X_{2})\|\text{d}\tau\\
&\leq &C\delta^{2} \|X_{1}-X_{2}\|_{r}\int_{0}^{t} \e^{r|\tau|}\text{d}\tau,
\end{eqnarray*}
and then 
\begin{equation}\label{C2*}
\|\big(\mathcal{G}(X_{1})\big)_{c}(t)-\big(\mathcal{G}(X_{2})\big)_{c}(t)\|_{r}\leq C\delta^{2} \|X_{1}-X_{2}\|_{r}.
\end{equation}
Thus,   \eqref{C1*} and \eqref{C2*} show that $\mathcal{G}$ is a contraction in $\mathcal{B}_{\delta}$ whenever $0<\delta<1$ is small enough. The bound \eqref{bond*} is given by \eqref{br}.
\end{proof}

We are now able to give a parametrisation of the local center manifold given in Definition  \ref{def46}. Let $\Psi : \mathcal{D}_{c}\longrightarrow \X_{h}$ the map defined by
$$\Psi(V_c)= (X_{V_{c}})_h(0),$$
then 
\begin{equation}\label{def.Wc}
W^{c} =\big\{(\Psi(V_{c}),V_{c}),\; V_{c}\in\D_c \big\},
\end{equation} 
and, in view of \eqref{yes}, $\Psi$ is quadratic at the origin, $\Psi(V_c)=\mathcal{O}(\|V_c\|^2)$. This is a particular case of a result  of Mielke \cite{Miel2}.

As a consequence we can  prove that the origin is Lyapunov stable within the center manifold:
\begin{lemm}\ph\label{Lemma.Lyapu}
Let $X$ be a solution of \eqref{approX} which lies in $W^{c}$. Assume that for some time $t^{\star}>0$, $\|X_{c}(t^{\star})\|\leq C\eps^{2}$. Then for all time $t\in\R$
\begin{equation}\label{lyapu}
\|X_{c}(t)\|\leq C\eps^{2}.
\end{equation}
\end{lemm}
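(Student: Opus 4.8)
The plan is to exploit the fact that a solution lying in $W^{c}$ is, by Proposition \ref{prop.X} and the parametrisation \eqref{def.Wc}, a fixed point of the map $\mathcal{G}$ with parameter $V_{c}=X_{c}(0)\in\D_{c}$, and that such a solution has an \emph{a priori} small hyperbolic part, $\sup_{t\in\R}|X_{h}(t)|\leq C\delta^{3}$ by \eqref{bond*}. Since we will take $\delta=\eps^{3/2}$, we have in particular $|X_{h}(t)|\leq C\eps^{9/2}\ll\eps^{2}$ for all $t$, so the only thing to control is the center component $X_{c}$. First I would observe that, because the truncation $\theta$ makes the nonlinearity globally controlled, the estimate \eqref{Ftronca0} gives $\|\underline F(X)\|\leq C\delta^{3}=C\eps^{9/2}$ uniformly along any such solution, and the variation-of-constants formula for the center part reads
\begin{equation*}
X_{c}(t)=K(t-t^{\star})X_{c}(t^{\star})+\int_{t^{\star}}^{t}K(t-\tau)\,Q\underline F(X)(\tau)\,\dd\tau .
\end{equation*}

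The key point is that $K$ is the propagator of the \emph{linear} center flow $\partial_{t}X_{c}=\Lambda_{c}X_{c}$, which is a diagonal family of harmonic oscillators with frequencies $\om_{n}=\sqrt{\lambda_{n}^{2}-m^{2}}$; hence $\|K(t)\|_{L(\X_{c},\X_{c})}\leq C$ uniformly in $t$ (this is the trivial analogue of Proposition \ref{Prop.Lin}, with $V\equiv0$). Therefore the above identity gives, for every $t$,
\begin{equation*}
\|X_{c}(t)\|\leq C\|X_{c}(t^{\star})\|+C\,|t-t^{\star}|\,\eps^{9/2}\leq C\eps^{2}+C\,|t-t^{\star}|\,\eps^{9/2},
\end{equation*}
which is only useful on a time interval of length $\lesssim\eps^{-5/2}$, not for all $t\in\R$. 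So the naive Duhamel bound is \emph{not} enough, and this is the main obstacle: one must use the structure of $W^{c}$ rather than just a Gr\"onwall-type argument.

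To get the bound for all time, I would instead argue by a fixed-point/uniqueness comparison. Consider the solution $\wt X$ in $W^{c}$ issued from the datum $\wt X_{c}(0):=X_{c}(t^{\star})$; by the definition of $W^{c}$ and \eqref{yes}, $\wt X=\mathcal{G}\wt X$ with parameter $\wt V_{c}=X_{c}(t^{\star})$, and since $\|\wt V_{c}\|\leq C\eps^{2}$, reexamining the proof of Proposition \ref{prop.X} with $\|V_{c}\|\leq C\eps^{2}$ (rather than merely $V_{c}\in\D_{c}$) shows that the fixed point $\wt X$ satisfies $\sup_{t\in\R}\|\wt X_{c}(t)\|\leq C\eps^{2}$: indeed the $V_{c}$-term in \eqref{yes} contributes $\|K(t)\wt V_{c}\|\leq C\eps^{2}$ and the nonlinear terms contribute $O(\delta^{3})=O(\eps^{9/2})$, all uniformly in $t$ — the contraction already run in that proof then produces a fixed point trapped in the smaller ball $\{\|X_{c}\|\leq C\eps^{2}\}$. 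Finally, since $W^{c}$ is an invariant manifold for \eqref{approX} (time-translation of a solution in $W^{c}$ stays in $W^{c}$, as is clear from the autonomous-truncated structure of \eqref{approX} on the center component together with the uniqueness statement of Proposition \ref{prop.X}), the time-translated solution $t\mapsto X(t+t^{\star})$ is the unique solution in $\mathcal{B}_{\delta}$ of \eqref{approX} with center datum $X_{c}(t^{\star})$ at $t=0$, hence coincides with $\wt X$. Therefore $\|X_{c}(t+t^{\star})\|=\|\wt X_{c}(t)\|\leq C\eps^{2}$ for all $t\in\R$, which is \eqref{lyapu}. The one delicate bookkeeping point, which I would check carefully, is that the constant $C$ in \eqref{lyapu} can be taken independent of $t^{\star}$ and of the particular solution: this follows because the smallness input $\|X_{c}(t^{\star})\|\leq C\eps^{2}$ enters \eqref{yes} only through the uniformly bounded term $K(t)\wt V_{c}$, and all other contributions are $O(\eps^{9/2})$.
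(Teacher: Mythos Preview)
Your argument has a genuine gap, and it is precisely the obstacle you already identified in the naive Duhamel step. You claim that, once $\|\wt V_{c}\|\leq C\eps^{2}$, ``the nonlinear terms contribute $O(\delta^{3})=O(\eps^{9/2})$, all uniformly in $t$''. But look again at the center component of the map $\mathcal{G}$ in \eqref{yes}: it contains the integral $\int_{0}^{t}K(t-\tau)\,Q\underline F(X)(\tau)\,\dd\tau$, and the only available bound on the integrand is $\|\underline F(X)\|\leq C\delta^{3}$ pointwise in $\tau$. This produces $C\delta^{3}|t|=C\eps^{9/2}|t|$, exactly as in the estimate displayed in the proof of Proposition \ref{prop.X}, and exactly the linear growth you rejected three lines earlier. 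The contraction argument there is run in the weighted space $\mathcal{E}_{r}$ (norm $\sup_{t}\e^{-r|t|}\|X(t)\|$), so it accommodates this growth; it does \emph{not} yield a uniform-in-$t$ bound on $\|\wt X_{c}(t)\|$, no matter how small $\wt V_{c}$ is. Your uniqueness/time-translation reduction to $\wt X$ is fine, but it does not help: you are left with the same unbounded-in-$t$ estimate you started with.

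The paper closes this gap by a completely different mechanism: it uses the conserved Hamiltonian $H$ of \eqref{eqme}, written as in \eqref{H0}, namely $H^{0}=\tfrac12(b_{0}^{2}-m^{2}a_{0}^{2})+J+\tfrac{1}{2(p+1)}\int_{M}(a_{0}+U)^{2p+2}$. Since $X\in W^{c}$, the bound \eqref{bond*} gives $|a_{0}(t)|,|b_{0}(t)|\leq C\delta^{3}=C\eps^{9/2}$ for all $t$, and evaluating $H^{0}$ at $t=t^{\star}$ (where $J(t^{\star})\leq C\eps^{4}$) shows $|H^{0}|\leq C\eps^{4}$. Then, for any $t$, dropping the nonnegative integral term yields $J(t)\leq H^{0}+\tfrac12 m^{2}a_{0}^{2}(t)\leq C\eps^{4}$, which is \eqref{lyapu}. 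The Hamiltonian is what converts smallness at one time into smallness for all time; a purely Duhamel/fixed-point argument cannot do this because the center flow is neutral and the nonlinear forcing, though small, is not integrable in time.
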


\begin{proof}
It is here convenient to work in the coordinates $(a_{n},b_{n})_{n\geq 1}$. Recall that from \eqref{H0} the Hamiltonian of \eqref{eqme} reads 
\begin{equation*}
H^0=\frac12(b^{2}_{0}-m^{2}a^{2}_{0})+J+\frac1{2(p+1)}\int_{M}\big(a_{0}+U\big)^{2p+2}\text{d}x,
\end{equation*} 
and observe that $J/C\leq \|X_{c}\|^{2}\leq CJ$. We first show that $|H^{0}|\leq C\eps^{4}$. By assumption, $J(t^{\star})\leq C\eps^{4}$, while by \eqref{bond*} we obtain \begin{equation}\label{bn}
|a_{0}(t)|,|b_{0}(t)|\leq C\delta^{6}=C\eps^{9}\leq C\eps^{2}\quad \text{for all}\quad t\in \R.
\end{equation} 
Next, using Lemma \ref{lemsobo} we obtain, for $t=t^{\star}$
\begin{equation*}
\frac1{2(p+1)}\int_{M}\big(a_{0}+U\big)^{2p+2}\text{d}x\leq C \eps^{4},
\end{equation*}
which proves the claim. Next, with \eqref{bn} we can write for all $t\in \R$
\begin{equation*}
J(t)\leq H^{0}-\frac12(b^{2}_{0}-m^{2}a^{2}_{0})\leq C\eps^{4},
\end{equation*}
which implies \eqref{lyapu}.
\end{proof}

\subsection{The local center-stable manifold is global}\label{4.4}

Let $\|V_{c}\|\leq \eps^{2}$, $|V_{s}|\leq \eps^{2}$ and consider the solution $Z=Z_{V_c,V_s}$ to \eqref{approx} given by Proposition \ref{prop.Z}. Set 
\begin{equation}\label{defteps}
t_{\eps}:=\frac4{m}\ln \frac1{\eps}.
\end{equation}
By \eqref{ab}, with this choice we have $|\h(t)|\leq C\eps^{2}$ for all $t\geq t_{\eps}/2$. Next, we define  $\Theta \in \mathcal{C}^{\infty}(\R,\R)$ so that 
\begin{equation*}  
\Theta(t)=\left\{
\begin{aligned}
& 0,\quad  t\leq t_{\eps}/2\\
& 1,\quad  t\geq t_{\eps},
\end{aligned}
\right.
\end{equation*} 
and with $|\Theta'(t)|\leq C/ t_{\eps}$ for all $t\in \R$. We set 
\begin{equation}\label{defY}
Y(t)=\Theta(t)\big(Z(t)+h(t)\big).
\end{equation}

\begin{prop}\ph\label{Prop.Y}
Consider the function $Y$ defined in \eqref{defY}. Then there exists $\wt{X}$ solution of~\eqref{approX} within $ W^{c}$ so that for all $t\in \R$
\begin{equation}\label{propert}
\|(Y-\wt{X})(t)\|\leq C\eps^{-2}\e^{-rt}\quad\mbox{and}\quad |(Y-\wt{X})_{h}(t)|\leq C\eps^{2}.
\end{equation}
 \end{prop}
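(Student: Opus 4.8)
The goal is to find a solution $\wt X$ of the truncated equation \eqref{approX} lying in the center manifold $W^c$ and approaching $Y$ exponentially as $t\to+\infty$, with the stronger bound $|(Y-\wt X)_h(t)|\le C\eps^2$ holding for all $t$. The natural strategy is a fixed-point argument for the \emph{difference} $R:=Y-\wt X$, exactly parallel to the construction of $W^c$ in Proposition \ref{prop.X}, but now with an inhomogeneous forcing coming from the fact that $Y$ is not an exact solution. First I would compute the defect: since $Z+h$ solves \eqref{eqli} (equivalently \eqref{eqme} with truncation), and $Y=\Theta\cdot(Z+h)$, one gets $\partial_t Y=\Lambda Y+\un{F}(Y)+G(t)$ where $G$ gathers the commutator term $\Theta'(t)(Z+h)(t)$ together with the mismatch $\Theta(\un F(Z+h)-\un F(Y))$ between truncating at $Z+h$ and at $Y$. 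The key observation is that $G$ is supported in $t\le t_\eps$, where $\Theta'\ne 0$ only on $[t_\eps/2,t_\eps]$ and where $|h(t)|\le C\eps^2$, so on the support of $\Theta'$ one has $\|(Z+h)(t)\|\le C\eps^2$ by \eqref{bond}, \eqref{petit} (noting $t_\eps<\eps^{-1}$), and $|\Theta'|\le C/t_\eps$; hence $\|G(t)\|\le C\eps^2/t_\eps\le C\eps^2$ on a bounded interval, so $\int_\R \|G(\tau)\|\e^{\pm m\tau}\dd\tau$ is finite and in fact $\le C\eps^{-2}$ (the $\e^{mt_\eps}=\eps^{-4}$ growth of the weight is beaten by... actually one must be careful here, see below).

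Next I would set up the integral operator. Write $R=Y-\wt X$; then $\wt X\in W^c$ means $\wt X$ solves \eqref{approX} and stays $\delta$-close to $0$ in the hyperbolic part, so $R$ should solve $\partial_t R=\Lambda R+\big(\un F(Y)-\un F(Y-R)\big)+G$. I would look for $R$ as a fixed point of the map
\begin{align*}
\mathcal{H}R(t)=&\int_{-\infty}^{t}\<\,\big(P\un F(Y)-P\un F(Y-R)+G\big)(\tau),\s_0^{\star}\e^{m\tau}\,\>\dd\tau\,\s_0\e^{-mt}\\
&-\int_{t}^{+\infty}\<\,\big(P\un F(Y)-P\un F(Y-R)+G\big)(\tau),\rho_0^{\star}\e^{-m\tau}\,\>\dd\tau\,\rho_0\e^{mt}\\
&+\int_{0}^{t}K(t-\tau)Q\big(\un F(Y)-\un F(Y-R)+G\big)(\tau)\dd\tau,
\end{align*}
in the space $\{R\in\mathcal{E}_r^+ : \|R\|_r\le C\eps^{-2},\ \sup_{t\ge0}|R_h(t)|\le C\eps^2\}$ (one checks the left-hand tails are finite because for $\tau<0$ one can use that $Y$ itself is compactly supported on the left, $Y(\tau)=0$ for $\tau\le t_\eps/2$, so all integrands vanish for $\tau\le 0$; thus really the integrals run over $[0,t]$ resp.\ $[t,\infty)$ and the bound $\|G(\tau)\|\e^{m\tau}$ is integrable over $[0,t_\eps]$ with integral $\le C\eps^2\cdot\eps^{-4}\cdot\frac{1}{t_\eps}\cdot t_\eps\sim C\eps^{-2}$). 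The contraction property comes from \eqref{Ftronca}: $\|\un F(Y)-\un F(Y-R)\|\le C\delta^2\|R\|$, so $\mathcal{H}$ contracts with constant $C\delta^2<1$, giving a unique fixed point $R$, hence $\wt X=Y-R$; that $\wt X$ lies in $W^c$ follows because $\wt X$ solves \eqref{approX} with $\sup_t|\wt X_h(t)|\le\delta$, so by uniqueness in Proposition \ref{prop.X} it coincides with $X_{V_c'}$ for $V_c'=Q\wt X(0)$. The two bounds in \eqref{propert} are then read off from the fixed-point space: $\|R\|_r\le C\eps^{-2}$ gives the first, and the hyperbolic part estimate — using that the forcing on the hyperbolic modes integrates to $O(\eps^2)$ after accounting for the exponential weights — gives $|R_h(t)|\le C\eps^2$.

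The main obstacle, and the step requiring genuine care rather than routine bookkeeping, is the weighted estimate on the forcing term $G$ and its interaction with the \emph{growing} solution $\rho_0\e^{mt}$ in the hyperbolic direction. One must verify that $\int_0^{t_\eps}\|G(\tau)\|\e^{m\tau}\dd\tau$, multiplied by the decaying factor $\e^{-mt}$, and symmetrically the term with $\e^{-m\tau}$ times $\e^{mt}$, both stay $\le C\eps^2$ for all $t\ge0$; here the precise choice $t_\eps=\frac{4}{m}\ln\frac1\eps$ is essential, since $\e^{mt_\eps}=\eps^{-4}$ must be controlled by the smallness $\|G\|\le C\eps^2/t_\eps$ on the support and by the fact that the $\rho_0$-contribution only sees $\int_t^\infty$, which is empty once $t>t_\eps$. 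I would organize this by splitting into the regimes $t\le t_\eps$ and $t>t_\eps$ and tracking the powers of $\eps$ carefully; everything else is a routine replay of the contraction arguments in Propositions \ref{prop.Z} and \ref{prop.X}.
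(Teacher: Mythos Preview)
Your overall strategy---write the difference $R=Y-\wt X$, derive $\partial_t R=\Lambda R+(\underline{F}(Y)-\underline{F}(Y-R))+G$ with a compactly supported forcing $G$, and solve by contraction---is exactly the paper's. But the fixed-point operator $\mathcal H$ you wrote will not deliver the decay in \eqref{propert}. You work in $\mathcal E_r^+=\{\sup_{t\ge0}\e^{-rt}\|R(t)\|<\infty\}$ and take the center Duhamel term as $\int_0^t K(t-\tau)Q(\ldots)\dd\tau$. With these choices the contraction does close, but the resulting bound is $\|R(t)\|\le C\eps^{-2}\e^{+rt}$, not $\le C\eps^{-2}\e^{-rt}$; so ``$\|R\|_r\le C\eps^{-2}$ gives the first bound'' is simply false in that space. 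And in any norm that \emph{does} force decay at $+\infty$, the term $\int_0^t K(t-\tau)Q(\underline{F}(Y)-\underline{F}(Y-R))(\tau)\dd\tau$ fails to decay: $K$ is only bounded and the integrand is $O(\delta^2\e^{-r\tau})$, so the integral is $O(\delta^2/r)$ uniformly in $t$, never $O(\e^{-rt})$. Since the center directions are neutral, the only way to make $R_c$ decay is to integrate \emph{from} $+\infty$.

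The fix (and what the paper does) is to replace the center term by $-\int_t^{+\infty}K(t-\tau)Q(\ldots)\dd\tau$ and run the contraction in $\mathcal E_r^-:=\{\sup_{t\in\R}\e^{rt}\|R(t)\|<\infty\}$; then the center Lipschitz contribution is bounded by $C\delta^2\|R\|_r\int_t^\infty\e^{-r\tau}\dd\tau=C\delta^2\|R\|_r\,\e^{-rt}/r$, and the argument closes with $\|R\|_r\le C\eps^{-2}$ giving precisely $\|R(t)\|\le C\eps^{-2}\e^{-rt}$. Two minor corrections once you make that change: the Lipschitz part $\underline{F}(Y)-\underline{F}(Y-R)$ does \emph{not} vanish for $\tau\le0$ (only $G$ does, since $Y(\tau)=0$ there but $\underline{F}(-R(\tau))$ need not be), so the tail $\int_{-\infty}^t$ cannot be truncated to $\int_0^t$ as you claim---harmless in $\mathcal E_r^-$ because $\e^{(m-r)\tau}$ is integrable on $(-\infty,t]$; and the defect is $G=\Theta'(Z+h)+\Theta\,\underline{F}(Z+h)-\underline{F}\big(\Theta(Z+h)\big)$, without the outer factor $\Theta$ on the last term, though this does not affect your size estimates.
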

 
\begin{proof}
The function $Y$ satisfies the equation 
\begin{equation*}
\partial_{t}Y=\Lambda Y+\underline{F}(Y)+\underline{\mathcal{S}},
\end{equation*}
with 
\begin{equation}\label{defS}
\underline{\mathcal{S}}(t):=\Theta'(t)(Z+h)(t)+\Theta(t)  \underline{F}\big(\,(Z+h)(t)\,\big)- \underline{F}\big(\,\Theta(t)(Z+h)(t)\,\big).
\end{equation}
Consider now a solution $\wt{X}$ of the equation $\dis \partial_{t}\wt{X}=\Lambda \wt{X}+\underline{F}(\wt{X})$ and set $\Delta=Y-\wt{X}$. Then $\Delta$ satisfies
\begin{equation}\label{eq.delta}
\partial_{t}\Delta=\Lambda \Delta+\underline{F}(Y)-\underline{F}(Y-\Delta)+\underline{\mathcal{S}}.
\end{equation}
The proof of the proposition consists in finding a solution of \eqref{eq.delta} so that \eqref{propert} holds and so that $\wt{X}(0):=Y(0)-\Delta(0) \in W^{c}$.

Set 
\begin{equation*}
\mathcal{E}^{-}_{r}=\big\{\Delta\in \mathcal{C}\big(\R,\X\big),\; \|\Delta\|_{r}:=\sup_{t\in \R}  \e^{rt}\|\Delta(t)\|<\infty\big\}.
\end{equation*}
and
\begin{equation*}
\mathcal{B}^{-}_{\delta}=\big\{\Delta\in \mathcal{E}^{-}_{r},\;  \sup_{t\in \R}|\Delta_{h}(t)|\leq \delta\big\},
\end{equation*} 
where again we choose $\delta^2=\eps^3$.\\
We  claim that we can define a contraction $\mathcal{J} :  \mathcal{B}^{-}_{\delta} \longrightarrow  \mathcal{B}^{-}_{\delta}$ by 
\begin{eqnarray*}
\mathcal{J}\Delta(t)&=& \int_{-\infty}^{t}\<\,P\big(\,\underline{F}(Y)-\underline{F}(Y-\Delta)+\underline{\mathcal{S}}\,\big)(\tau),\s_0^{\star}\e^{m\tau}\,\>\text{d}\tau\,\s_0 \e^{-m t}\\
&&-\int_{t}^{+\infty}\<\,P\big(\,\underline{F}(Y)-\underline{F}(Y-\Delta)+\underline{\mathcal{S}}\,\big)(\tau),\rho_0^{\star}\e^{-m\tau}\,\>\text{d}\tau\,\rho_0 \e^{m t}\\
&&-\int_{t}^{+\infty}K(t-\tau)Q\big(\,\underline{F}(Y)-\underline{F}(Y-\Delta)+\underline{\mathcal{S}}\,\big)(\tau)\text{d}\tau.
\end{eqnarray*}
It is clear that the fixed point will satisfy \eqref{eq.delta}. In a second time we will check that   \eqref{propert} holds.\\
$\bullet$ We begin by   estimating $\underline{\mathcal{S}}$. First observe that by Proposition \ref{prop.Z} 
\begin{equation}\label{bU}
\|Z(t)\|\leq C\eps^{2} \quad \text{for all}\quad  0\leq t\leq \eps^{-1}.
\end{equation}
Then, for $\eps>0$ small enough we have  $t_{\eps}<\eps^{-1}$ and   we can write for all $t\in [t_{\eps}/2,t_{\eps}]$
\begin{equation}\label{bZ}
\|Z(t)\|\leq \sup_{t\in[t_{\eps}/2,t_{\eps}]}\|Z(t)\|\e^{-r(t-t_{\eps})}\leq C\eps^{2}\e^{-r(t-t_{\eps})}\leq C\eps^{-2}\e^{-rt},
\end{equation}
since $\dis \e^{rt_{\eps}}\leq  \e^{mt_{\eps}}= \eps^{-4}$. Denote by $\chi$ the indicator of the interval $\dis [t_{\eps}/2, t_{\eps}]$. 
As $Z+h$ is small on $[t_{\eps}/2,t_{\eps}]$ we deduce   from \eqref{defS} that
\begin{equation}\label{ss}
\|\underline{\mathcal{S}}(t)\|\leq \frac{C}{t_{\eps}}\big(\|Z(t)\|+\|h(t)\|\big)\chi(t)+C\big(\|Z(t)\|+\|h(t)\|\big)\chi(t).
\end{equation}
As a result, from \eqref{bZ} and \eqref{ab} we infer
\begin{equation}\label{bD}
\|\underline{\mathcal{S}}(t)\| \leq   C(\eps^{-2}\e^{-rt}+\e^{-mt})\chi(t)\leq C\eps^{-2}\e^{-rt}\chi(t).
\end{equation}
The inequality \eqref{ss} together with  \eqref{bU} also gives 
\begin{equation}\label{bornU}
\|\underline{\mathcal{S}}(t)\|\leq C\frac{\eps^{2}}{|\ln \eps\,|}\chi(t).
\end{equation}
$\bullet$ We have   $|Y_{h}|\leq \delta$. For   $\Delta \in \mathcal{B}^{-}_{\delta}$ we can apply \eqref{tronca} to deduce
\begin{equation*}
\|\big(F(Y)-F(Y-\Delta)\big)(\tau)\|\leq C\delta^{2}\|\Delta(\tau)\|\leq C\delta^{2}\|\Delta\|_{r}\e^{-r\tau}.
\end{equation*}
Thanks to this latter inequality and \eqref{bD}, we obtain
\begin{eqnarray*}
\|\mathcal{J}\Delta(t)\|\e^{rt}&\leq&C \int_{-\infty}^{t} \big(\delta^{2}\|\Delta\|_{r}+\eps^{-2}\chi(\tau) \big)\e^{(m-r)\tau}\text{d}\tau\, \e^{(r-m )t}\nonumber\\
&&+C\int_{t}^{+\infty}\big(\delta^{2}\|\Delta\|_{r}+\eps^{-2}\chi(\tau) \big)\e^{-(m+r)\tau}\text{d}\tau\,  \e^{(m+r) t}\nonumber\\
&&+C\int_{t}^{+\infty}\big(\delta^{2}\|\Delta\|_{r}+\eps^{-2}\chi(\tau) \big) \e^{-r \tau}\text{d}\tau\,     \e^{r t}\nonumber\\
&\leq&C(\delta^{2}\|\Delta\|_{r}+\eps^{-2}),
\end{eqnarray*}
and therefore 
\begin{equation}\label{4.44}
\|\mathcal{J}\Delta\|_{r}\leq C(\delta^{2}\|\Delta\|_{r}+\eps^{-2}).
\end{equation}
Next, by \eqref{bornU}, for all $t\in \R$
\begin{eqnarray}\label{4.46}
|(\mathcal{J}\Delta)_{h}(t)|&\leq&C \int_{-\infty}^{t} \big(\delta^{3}+\frac{\eps^{2}}{|\ln \eps\,|}\chi(\tau) \big)\e^{m\tau}\text{d}\tau\, \e^{-m t}\nonumber\\
&&+C\int_{t}^{+\infty}\big(\delta^{3}+\frac{\eps^{2}}{|\ln \eps\,|}\chi(\tau) \big)\e^{-m\tau}\text{d}\tau\,  \e^{mt}\nonumber\\
&\leq&C(\delta^{3}+\eps^{2}).
\end{eqnarray}
The bounds \eqref{4.44} and \eqref{4.46} show that $\mathcal{J} : \mathcal{B}^{-}_{\delta} \longrightarrow \mathcal{B}^{-}_{\delta}$.\\[5pt]
$\bullet$ Let $\Delta_{1},\Delta_{2}\in \mathcal{B}^{-}_{\delta} $. By \eqref{tronca}, we have
\begin{equation}\label{diff}
\|\big(F(Y-\Delta_{2})-F(Y-\Delta_{1})\big)(\tau)\|\leq C\delta^{2}\|\Delta_{1}-\Delta_{2}\|_{r}\e^{-r\tau}.
\end{equation}
Clearly,
\begin{eqnarray*}
\mathcal{J}\Delta_{1}(t)-\mathcal{J}\Delta_{2}(t)&=& \int_{-\infty}^{t}\<\,P\big(\,\underline{F}(Y-\Delta_{2})-\underline{F}(Y-\Delta_{1}),\s^{\star}\e^{m\tau}\,\>\text{d}\tau\,\s \e^{-m t}\\
&&-\int_{t}^{+\infty}\<\,P\big(\,\underline{F}(Y-\Delta_{2})-\underline{F}(Y-\Delta_{1})\,\big)(\tau),\rho^{\star}\e^{-m\tau}\,\>\text{d}\tau\,\rho \e^{m t}\\
&&-\int_{t}^{+\infty}K(t-\tau)Q\big(\,\underline{F}(Y-\Delta_{2})-\underline{F}(Y-\Delta_{1})\,\big)(\tau)\text{d}\tau,
\end{eqnarray*}
and by \eqref{diff}, for all $t\in \R$ 
 \begin{eqnarray*}
\|\mathcal{J}\Delta_{1}(t)-\mathcal{J}\Delta_{2}(t)\|\e^{rt}&\leq&C \int_{-\infty}^{t} \big(\delta^{2}\|\Delta_{1}-\Delta_{2}\|_{r} \big)\e^{(m-r)\tau}\text{d}\tau\, \e^{(r-m )t}\nonumber\\
&&+C\int_{t}^{+\infty}\big(\delta^{2}\|\Delta_{1}-\Delta_{2}\|_{r} \big)\e^{-(m+r)\tau}\text{d}\tau\,  \e^{(m+r) t}\nonumber\\
&&+C\int_{t}^{+\infty}\big(\delta^{2}\|\Delta_{1}-\Delta_{2}\|_{r} \big) \e^{-r \tau}\text{d}\tau\,     \e^{r t}\nonumber\\
&\leq&C\delta^{2}\|\Delta_{1}-\Delta_{2}\|_{r}. 
\end{eqnarray*}
 As a consequence, for $\delta>0$ small enough, the map $\mathcal{J} : \mathcal{B}^{-}_{\delta} \longrightarrow \mathcal{B}^{-}_{\delta}$ is a contraction, thus there exists a unique fixed point $\Delta \in \mathcal{B}^{-}_{\delta}$. 
By \eqref{4.46} and choosing again $\delta^2=\eps^3$, we get $|\Delta_{h}(t)|\leq \eps^{2}$. Furthermore, as $\Delta=\mathcal{J}(\Delta)$, \eqref{4.44} leads to  $\|\Delta\|_{r}\leq C\eps^{-2}$, which in turn implies  $\|\Delta(t)\|\leq C\eps^{-2}\e^{-rt}$ for all $t\in \R$.\\[5pt]
$\bullet$ We now define $\wt{X}=Y-\Delta$ and it remains to show that $\wt{X}\in W^{c}$. By definition, it is sufficient to prove that $|\wt{X}_{h}(t)|\leq \delta$ for all $t$. Write  $Y=\Theta(Z+h)$. By \eqref{bond} and the choice of $t_{\eps}$ we get that $|Y_{h}(t)|\leq C\eps^{2}$  for all $t\in \R$ and then 
\begin{equation*}
|\wt{X}_{h}(t)|\leq |Y_{h}(t)|+ |\Delta_{h}(t)|\leq C\eps^{2}\leq \delta,\quad \text{for all}\quad t\in \R,
\end{equation*}
which implies the result, for $\eps>0$ small enough.
\end{proof} 

\begin{prop}\ph
 Let $|V_{s}|, \|V_{c}\|\leq \eps^{2}$ and $\eps$ small enough. The solution $Z$ of \eqref{approx} with $Z_c(0)=V_c$ and $Z_s(0)=V_s$ constructed in Proposition \ref{prop.Z} 
 satisfies $\|Z_{c}(t)\|\leq \delta$ for all $t\geq 0$. In particular $X=Z+h$ is a solution of \eqref{eqme} for $t\geq 0$. 
\end{prop}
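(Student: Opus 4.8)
\medskip
\noindent\emph{Proof plan.} The only point that needs work is the estimate $\|Z_{c}(t)\|\leq \delta$ for all $t\geq 0$; granting it, the last assertion is immediate. Indeed $h(t)=(\a(t),\b(t))$ is space-independent, so $h(t)\in\X_{h}$, $h_{c}=Qh=0$ and $Z_{c}=X_{c}$; hence $\|Z_{c}(t)\|\leq\delta$ makes the cut-off $\theta(\|Z_{c}(t)\|)$ equal to $1$ along the whole half-trajectory, so that $\un(Z(t))=\mathcal{N}(Z(t))$ for all $t\geq 0$. Then the truncated system \eqref{approx} coincides with \eqref{syst.Z}, i.e. $X=Z+h$ solves \eqref{eqme} on $\R_{+}$.

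To get $\|Z_{c}(t)\|\leq \delta$ for all $t\geq0$ I would combine \eqref{petit}, which already gives $\|Z_{c}(t)\|\leq C\eps^{2}$ on $[0,\eps^{-1}]$, with the shadowing of Proposition \ref{Prop.Y} to cover $t\geq \eps^{-1}$ (note that $\eps^{-1}>t_{\eps}$ once $\eps$ is small). Let $Y=\Theta(Z+h)$ be as in \eqref{defY} and let $\wt X\in W^{c}$ be the solution of \eqref{approX} provided by Proposition \ref{Prop.Y}, so that $\Delta:=Y-\wt X$ satisfies $\|\Delta(t)\|\leq C\eps^{-2}\e^{-rt}$ for all $t\in\R$. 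Fix the intermediate time $t^{\star}:=\eps^{-1}$; then $\Theta(t^{\star})=1$, so $Y_{c}(t^{\star})=Z_{c}(t^{\star})$ (using again $h_{c}=0$), whence $\|Y_{c}(t^{\star})\|\leq C\eps^{2}$ by \eqref{petit}, while $\|\Delta_{c}(t^{\star})\|\leq C\eps^{-2}\e^{-r/\eps}\leq\eps^{2}$ for $\eps$ small; therefore $\|\wt X_{c}(t^{\star})\|\leq C\eps^{2}$. Applying Lemma \ref{Lemma.Lyapu} (Lyapunov stability inside $W^{c}$) to $\wt X$ then gives $\|\wt X_{c}(t)\|\leq C\eps^{2}$ for all $t\in\R$. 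For $t\geq t_{\eps}$ we have $Z_{c}(t)=Y_{c}(t)=\wt X_{c}(t)+\Delta_{c}(t)$, so
\[
\|Z_{c}(t)\|\leq C\eps^{2}+C\eps^{-2}\e^{-rt}\leq C\eps^{2},\qquad t\geq \eps^{-1},
\]
and together with \eqref{petit} this yields $\|Z_{c}(t)\|\leq C\eps^{2}$ for every $t\geq0$. Since $\delta=\eps^{3/2}$, this is $\leq\delta$ once $\eps$ is small enough.

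The only substantial step is this passage, through $Y$ and $\wt X$, from the merely \emph{conditional} smallness of $Z_{c}$ on $[0,\eps^{-1}]$ in Proposition \ref{prop.Z} to smallness for all positive times: it rests entirely on the fact that $Z+h$ is shadowed by a genuine center-manifold orbit which, being Lyapunov stable within $W^{c}$, cannot leave the $\eps^{2}$-neighbourhood of the origin. Everything else is bookkeeping on the scales, namely $t_{\eps}=\tfrac{4}{m}\ln\tfrac{1}{\eps}<\eps^{-1}$, $\eps^{-2}\e^{-r\eps^{-1}}\leq\eps^{2}$, and $C\eps^{2}\leq\delta=\eps^{3/2}$, all of which hold for $\eps$ small.
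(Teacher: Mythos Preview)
Your proof is correct and follows essentially the same route as the paper: use \eqref{petit} on $[0,\eps^{-1}]$, then compare $Z_{c}$ with the center-manifold orbit $\wt X_{c}$ via $Y$ and Proposition~\ref{Prop.Y}, invoke Lemma~\ref{Lemma.Lyapu} at $t^\star=\eps^{-1}$ to propagate the $C\eps^{2}$ bound on $\wt X_{c}$, and recombine to get $\|Z_{c}(t)\|\leq C\eps^{2}\leq\delta$ for all $t\geq0$. Your additional explanation of why $\|Z_{c}\|\leq\delta$ removes the cut-off (so that $\un=\mathcal N$ and \eqref{approx} reduces to \eqref{syst.Z}) is a welcome clarification that the paper leaves implicit.
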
  
 
\begin{proof} (see Figure \ref{Schemastrategie} on page \pageref{Schemastrategie})
 For such a $Z$, consider $Y$ defined by \eqref{defY} and $\wt{X}$ defined in Proposition \ref{Prop.Y}. For $t\geq t_{\eps}$, we have $Y=Z+h$, thus  by \eqref{propert} 
 \begin{equation*}
 \|\wt{X}_{c}(t)-Z_{c}(t)\|=\|\wt{X}_{c}(t)-Y_{c}(t)\|\leq \|\wt{X}(t)-Y(t)\|\leq C\eps^{-2}\e^{-rt}, \quad \text{for}\quad t\geq t_{\eps}.
 \end{equation*}
 Recall that by  \eqref{defteps},  $\dis t_{\eps}:=\frac4{m}\ln \frac{1}{\eps}$. Thus, for $\eps>0$ small enough $t_{\eps}\leq \eps^{-1}$ and the previous  inequality implies
 \begin{equation}\label{bble}
  \|\wt{X}_{c}(t)-Z_{c}(t)\|\leq  C\eps^{-2}\e^{-r\eps^{-1}}\leq C\eps^{2}, \quad \text{for}\quad t\geq \eps^{-1}.
 \end{equation}
  
 Now, take $t^{\star}=\eps^{-1}$, by \eqref{petit} we have $\|Z_{c}(t^{\star})\|\leq C\eps^{2}$, and then \eqref{bble} implies $\|\wt{X}_{c}(t^{\star})\|\leq C\eps^{2}$. By Lemma \ref{Lemma.Lyapu}, we deduce that $\|\wt{X}_{c}(t)\|\leq C\eps^{2}$ for all $t\geq \eps^{-1}$, and coming back to \eqref{bble}, we infer
\begin{equation*} 
\|Z_{c}(t)\|\leq C\eps^{2}\leq \delta,\quad \text{for all} \quad t\geq \eps^{-1}.
\end{equation*}
This bound together with \eqref{petit} conclude the proof.
 \end{proof}

Gathering the results of the previous propositions, we get the following Theorem, which ensures that~$W^{cs}$ is the standard center-stable manifold of $0$ for equation \eqref{eqme}. 
\begin{theo}\ph\label{ThWcs}
For $\eps$ sufficiently small, for all $\|V_c\|,|V_s|\leq\eps^2$, the functions
$$X_{V_c,V_s}=Z_{V_c,V_s}+h,$$
(where $Z_{V_c,V_s}$ is given by Proposition \ref{prop.Z}) defined for $t\geq 0$ are solutions of \eqref{eqme}. Moreover, $Z_{V_c,V_s}$ belongs to $\mathcal{E}^{+}_{r}$ and satisfy
$$\|Z_c(t)\|\leq C\eps^{2}\leq \delta, \quad \|Z_h(t)\|\leq C\eps^2, \quad \text{for all }t\in\R^+ ;$$
and there exists $\wt{X}\in W^c$ such that
$$\|(X_{V_c,V_s}-\wt{X})(t)\|\leq C\eps^{-2}\e^{-rt}.$$
\end{theo}

This completes the proof of Theorem \ref{thm1}.

\subsection{Construction of reversible heteroclinic solutions: proof of Theorem \ref{coro.thm}}\label{4.5}

Recall that the vector field is reversible, and thus if the initial condition of a solution $X$ satisfies
$$X(0)=SX(0),$$
then the solution is reversible, $i.e.$
$$X(-t)=SX(t), \quad \text{for all }t\in\R.$$

\begin{lemm}\ph
For $\eps$ sufficiently small, for all $\|V_c\|\leq\eps^2$ verifying
\begin{equation*} 
V_c=SV_c,
\end{equation*}
$X_{V_c,V_s}$ defined in Theorem \ref{ThWcs} satisfies
$$X_{V_c,0}(0)=SX_{V_c,0}(0).$$
\end{lemm}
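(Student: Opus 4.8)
Since the symmetry acts by $S(a_n)=a_n$ and $S(b_n)=-b_n$, the reversibility plane is $\{X\mid X=SX\}=\{X\mid b_n=0\ \text{for all}\ n\geq 0\}$. So the plan is simply to show that the initial datum $X_{V_c,0}(0)$ has all its velocity components $b_n$ equal to zero. Writing $X_{V_c,0}=Z_{V_c,0}+h$ and using that $S$ is linear, it is enough to check separately that $h(0)$ and $Z_{V_c,0}(0)$ lie in the reversibility plane.

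First I would deal with $h(0)$. The homoclinic orbit only carries the zeroth mode, i.e.\ $h(t)\in\X_h$ for every $t$, so $a_n(0)=b_n(0)=0$ for all $n\geq 1$ automatically. For the zeroth mode, the explicit formula \eqref{def.homo} gives $\b(0)=0$ because $\sinh(0)=0$; equivalently $\a$ is even and $\b$ is odd, so $h$ is a reversible solution that crosses the reversibility plane exactly at $t=0$. Hence $h(0)=(\a(0),0)$ and $h(0)=Sh(0)$.

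Next I would treat $Z_{V_c,0}(0)$. Its center component is $Z_c(0)=V_c$ by \eqref{CI}, and $V_c=SV_c$ by hypothesis; moreover $V_c\in\X_c=\{a_0=b_0=0\}$, so the center part has all $b_n=0$. For the hyperbolic component I would evaluate the fixed-point identity $Z=\mathcal{F}Z$ of Proposition \ref{prop.Z} at $t=0$: using $K(0,0)=\mathrm{Id}$, the vanishing of $\int_0^0(\cdot)$, and the fact that $V_c$ contributes only to $\X_c$, one obtains
\[
Z_h(0)=V_s\,\s(0)-\Big(\int_0^{+\infty}\<\,P\un{(Z)}(\tau),\rho^{\star}(\tau)\,\>\,\dd\tau\Big)\rho(0).
\]
When $V_s=0$ the first term disappears and $Z_h(0)$ is a scalar multiple of $\rho(0)$, which by \eqref{init} has vanishing $b_0$-component. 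Combined with the center part, $Z_{V_c,0}(0)$ lies in the reversibility plane.

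Adding the two contributions gives that $X_{V_c,0}(0)=Z_{V_c,0}(0)+h(0)$ has all $b_n=0$, i.e.\ $X_{V_c,0}(0)=SX_{V_c,0}(0)$. The only point requiring care is the bookkeeping at $t=0$: one must be sure that the $\s$-direction in $Z_h(0)$ is precisely the component parametrized by $V_s$ (by \eqref{init}, $\s(0)$ is the $b_0$-axis and $\rho(0)$ the $a_0$-axis), so that setting $V_s=0$ genuinely kills the $b_0$-component; everything else is a direct reading-off of the construction in Proposition \ref{prop.Z} and of the explicit homoclinic \eqref{def.homo}.
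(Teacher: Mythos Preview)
Your proof is correct and follows essentially the same approach as the paper: you use $h(0)=Sh(0)$ from the explicit formula~\eqref{def.homo}, evaluate the fixed-point identity $Z=\mathcal{F}Z$ at $t=0$ with $V_s=0$ to find that $Z_{V_c,0}(0)=V_c+c\,\rho(0)$ for a scalar $c$, and conclude via $S\rho(0)=\rho(0)$ and $SV_c=V_c$. Your write-up is slightly more explicit about the coordinate bookkeeping, but the argument is identical.
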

\begin{proof}
On the one hand, from the explicit form \eqref{def.homo} of $h$, we know that $h(0)=Sh(0)$. On the other hand, given that $Z$ is a fixed point of the map $\mathcal{F}$ defined in the proof of Proposition \ref{prop.Z}, we get
$$Z_{V_c,0}(0)=V_c-\int^{+\infty}_{0}\<\,P\un{(Z)}(\tau),\rho^{\star}(\tau)\,\>\text{d}\tau\,\rho(0).$$
And since $\rho(0)=\left(\begin{array}{cc}
1 \\ 0 \end{array}\right)$ (see Lemma \ref{LemLin}), we have  $S\rho(0)=\rho(0)$. So, if $V_c=SV_c$, then $Z_{V_c,0}(0)=SZ_{V_c,0}(0)$.
\end{proof}

As a consequence of the previous lemma, $X_{V_c,0}$ is a reversible solution.
 From Theorem \ref{ThWcs}, we then get that on the one hand, $Z_{V_c,0}=X_{V_c,0}-h$ belongs to $\mathcal{E}_{r}$ and satisfy
$$\|(Z_{V_c,0})_c(t)\|\leq\delta, \quad \|(Z_{V_c,0})_h(t)\|\leq C\eps^2, \quad \text{for all }t\in\R ;$$
and on the other hand, there exists $X\in W^c$ such that
$$\|(X_{V_c,0}-X)(t)\|\leq C\eps^{-2}\e^{-rt} \quad \text{for all}\quad t\geq0,$$
and 
$$ \|(X_{V_c,0}-SX)(t)\|\leq C\eps^{-2}\e^{rt}  \quad \text{for all}\quad t\leq0.$$
This means that $X_{V_c,0}$ is an heteroclinic connection between the two solutions $X$ and $SX$ of the center manifold $W^c$.\\
Then Theorem \ref{coro.thm} follows since the condition $V_c=SV_c$ and $V_{s}=0$ is equivalent to \eqref{sol3}.


\appendix

\section{}

In this appendix we recall a result  concerning the long time behaviour of the solution of the ordinary differential equation
\begin{equation*} 
\ddot x= (-\alpha^2+q(t))x,\quad t\in\R
\end{equation*}
where $\alpha$ is a real constant and $t\mapsto q(t)$ is a continuous function. Of course the solution of such linear equation are globally defined, but we would like to know whether the solution are bounded or not. It turns out that even if $\dis \|q\|_\infty < \alpha^2$ and $q(t)\longrightarrow 0$ when $t\longrightarrow \pm \infty$, the solutions may grow indefinitely\footnote{Consider the example induced by the solution $x(t)= \sin t (1+(t-\sin t\cos t)^2)$ (see O. Perron \cite{Perron}). }. The good condition concerns the integrability of $\dis |q|$:
\begin{lemm}\ph\label{A2}
Assume that $\dis \int_\R |q(t)|\text{d}t<+\infty$ then all the solutions of the Cauchy problem
\begin{equation}\label{eq.appen}
\left\{
\begin{aligned}
&\dot x= y\\
& \dot y=(-\alpha^2+q(t))x,
\end{aligned}
\right.
\end{equation}
 are bounded on $\R$. More precisely, there exists $C>0$ which only depends on $q$ so that
 $$
\big(\a^{2}|x(t)|^2+|y(t)|^2\big)\leq C\big(\a^{2}|x(0)|^2+|y(0)|^2\big)\quad \text{for all } t\in\R.
$$
\end{lemm}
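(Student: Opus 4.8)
The plan is to rewrite the system \eqref{eq.appen} in terms of the ``free'' dynamics (the $q\equiv 0$ case, which is the harmonic oscillator of frequency $\alpha$) and then run a Gronwall argument on the resulting integral equation. First I would introduce the energy-type norm $N(t)^2 = \alpha^2|x(t)|^2 + |y(t)|^2$ and observe that for $q\equiv 0$ this quantity is exactly conserved, since the flow is a rotation in the rescaled coordinates $(\alpha x, y)$. Concretely, let $S(t)$ denote the flow of $\dot x = y,\ \dot y = -\alpha^2 x$; then $S(t)$ preserves $N$, and by the Duhamel formula the solution of \eqref{eq.appen} satisfies
\begin{equation*}
\begin{pmatrix} x(t) \\ y(t) \end{pmatrix} = S(t)\begin{pmatrix} x(0) \\ y(0) \end{pmatrix} + \int_0^t S(t-s)\begin{pmatrix} 0 \\ q(s)x(s) \end{pmatrix}\,\mathrm{d}s .
\end{equation*}

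Next I would take the $N$-norm of both sides. Since $S(t-s)$ preserves $N$ and $N\big((0,\,q(s)x(s))\big) = |q(s)||x(s)| \leq \alpha^{-1}|q(s)|\,N(s)$, the triangle inequality gives, for $t\geq 0$,
\begin{equation*}
N(t) \leq N(0) + \frac{1}{\alpha}\int_0^t |q(s)|\,N(s)\,\mathrm{d}s .
\end{equation*}
Then Gronwall's inequality yields $N(t) \leq N(0)\exp\!\big(\tfrac1\alpha\int_0^t|q(s)|\,\mathrm{d}s\big) \leq N(0)\exp\!\big(\tfrac1\alpha\|q\|_{L^1(\R)}\big)$, and since $\|q\|_{L^1(\R)}<\infty$ by hypothesis, this is a bound of the form $N(t)\leq C\,N(0)$ with $C$ depending only on $q$ (and $\alpha$). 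The same argument run backwards in time (or applied to $\tilde q(t)=q(-t)$) handles $t\leq 0$, giving the two-sided bound $\alpha^2|x(t)|^2+|y(t)|^2 \leq C\big(\alpha^2|x(0)|^2+|y(0)|^2\big)$ for all $t\in\R$, which is the claimed estimate.

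There is no serious obstacle here: the only point requiring a little care is the verification that $S(t)$ genuinely preserves $N$ (equivalently, that $\tfrac{\mathrm d}{\mathrm d t}(\alpha^2 x^2 + y^2) = 0$ along the free flow), and the elementary fact that $|q(s)x(s)|\le \alpha^{-1}|q(s)|N(s)$, both of which are immediate. The footnote's Perron example shows that the integrability hypothesis cannot be dropped, but it plays no role in the proof itself. One could alternatively avoid Duhamel entirely and differentiate $N(t)^2$ directly: along a solution of \eqref{eq.appen} one computes $\tfrac{\mathrm d}{\mathrm d t}N(t)^2 = 2q(t)x(t)y(t) \leq 2|q(t)|\,|x(t)|\,|y(t)| \leq |q(t)|\,N(t)^2$ (after rescaling, using $2|x||y|\le \alpha^{-1}(\alpha^2 x^2 + y^2)/1$, adjusting constants), and then the differential form of Gronwall gives $N(t)^2 \le N(0)^2\exp\!\big(\int_0^t|q|\big)$; this is perhaps the cleanest route and I would likely present it this way.
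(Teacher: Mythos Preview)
Your argument is correct. Both you and the paper start from the same Duhamel representation against the free harmonic flow $S(t)$, using that $S$ preserves the norm $N=(\alpha^2x^2+y^2)^{1/2}$, and both bound the inhomogeneous term by $|q(s)|\,|x(s)|\lesssim |q(s)|N(s)$. The divergence is only in how the resulting integral inequality is closed: you apply Gronwall directly to obtain $N(t)\le N(0)\exp\big(\alpha^{-1}\!\int_0^t|q|\big)$, while the paper instead partitions $\R_+$ into finitely many intervals $[T_j,T_{j+1}]$ on each of which $\int|q|\le\tfrac12$, so that a sup-argument gives at most a doubling per interval and hence a bound $2^{k+1}N(0)$. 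Your Gronwall route is shorter and yields an explicit constant; the paper's interval-splitting is slightly more hands-on but avoids invoking Gronwall. Your alternative of differentiating $N(t)^2$ directly is also fine (indeed $\tfrac{d}{dt}N^2=2q\,xy$ and $2|xy|\le\alpha^{-1}N^2$), and is arguably the most economical presentation. One small remark: your constant depends on $\alpha$ through $\alpha^{-1}$, which is harmless in the paper's application since there $\alpha^2=\lambda_n^2-m^2\ge\lambda_1^2-m^2>0$ is bounded away from zero.
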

The proof is classical (see for instance \cite[page 212]{HS}) but  does not precise the bound on the solution in term of the initial datum. The following argument is more explicit.

\proof
Denote by $S$ the flow of \eqref{eq.appen} for $q\equiv 0$, and  introduce the norm $\|x,y\|=(\a^{2}x^{2}+y^{2})^{1/2}$ (observe that $S$ preserves this norm). For $T\geq 0$,  the Duhamel formula reads
\begin{equation}\label{duha.T}
\left(\begin{array}{c}
x(t)\\y(t)\end{array}\right) =S(t-T)\left(\begin{array}{c}
x(T)\\y(T)\end{array}\right)+\int_T^t S(t-T-s)\left(\begin{array}{c}
0\\q(s)x(s)\end{array}\right)\text{d}s.
\end{equation}
There exists $T_{1}$ so that  $\dis \int_{0}^{T_{1}} |q(t)|\text{d}t\leq 1/2$. Therefore from \eqref{duha.T}  with $T=0$ we deduce that for all $t\in [0,T_{1}]$
$$
\|x(t),y(t)\|\leq \|x(0),y(0)\|+ \frac12\max_{s\in [0,T_{1}]}\|x(s),y(s)\|, $$
and thus $\dis \max_{s\in [0,T_{1}]}\|x(s),y(s)\|\leq 2\|x(0),y(0)\|.$\\
By induction, we can define a finite number of times $T_{2},\dots, T_{k}$ so that $\dis \int_{T_{j}}^{T_{j+1}} |q(t)|\text{d}t=1/2$ and $\dis \int_{T_{k}}^{+\infty} |q(t)|\text{d}t\leq1/2$. Then we apply  \eqref{duha.T} with $T=T_{j}$  and we show $\dis \max_{s\in [T_{j},T_{j+1}]}\|x(s),y(s)\|\leq 2\|x(T_{j}),y(T_{j})\|$ which in turn implies
$$
\|x(t),y(t)\|\leq 2^{k+1}\|x(0),y(0)\| \quad \text{for all } t\geq 0.
$$
\endproof


\begin{thebibliography}{99}

\bibitem{Bam04}
D.~Bambusi.
\newblock  Birkhoff normal form for some nonlinear {PDE}s.
\newblock{\em Comm. Math. Phys.} 234 (2003), 253--285.

  \bibitem{BDGS}
D.~Bambusi, J.-M. Delort, B.~Gr{\'e}bert and J. Szeftel.
 \newblock Almost global existence for Hamiltonian semilinear Klein-Gordon equations with small Cauchy data on Zoll manifolds. 
 \newblock{\em Comm. Pure Appl. Math.} 60 (2007), no.~11, 1665--1690.
 
 \bibitem{BG06}
D.~Bambusi and B.~Gr{\'e}bert.
\newblock  Birkhoff normal form for {PDE}s with tame modulus.
\newblock{\em Duke Math. J.} 135 (2006), 507--567.

 
 \bibitem{BBP1}
M. Berti, L. Biasco and M. Procesi. 
\newblock  KAM theory for the Hamiltonian derivative wave equation.
\newblock{\em Annales scientifiques de l'ENS}, fascicule 2, p. 299--371, (2013).
   
   \bibitem{BertiBolle}
 M. Berti and  P. Bolle.
\newblock  Quasi-periodic solutions with Sobolev regularity of NLS on $\mathbb{T}^{d}$ and a multiplicative potential.
\newblock{\em J. European Math. Society}, no. 15 (2013).

\bibitem{BertiCarminati}
M. Berti and C. Carminati.
\newblock Chaotic dynamics for perturbations of infinite dimensional Hamiltonian systems.
\newblock{\em Nonlinear Analysis}, no. 48 (2002), 481--504.

\bibitem{Bour}
J. Bourgain.
\newblock On invariant tori of full dimension for 1D periodic NLS.
\newblock {\em J. Funct. Anal. } 229 (2005), 62--94.

\bibitem{Delort09}
J.-M. Delort.
\newblock Long-time Sobolev stability for small solutions of quasi-linear Klein-Gordon equations on the circle. 
\newblock {\em Trans. Amer. Math. Soc. } 361 (2009), no. 8, 4299--4365.

 \bibitem{EK}
L.H. Eliasson et  S.B. Kuksin.
\newblock KAM for the nonlinear Schr\"odinger equation.
\newblock{\em Ann. of Math. } (2) 172 (2010), no. 1, 371--435.   

	\bibitem{GJT}
B. Gr\'ebert, T.~J{\'e}z{\'e}quel  and L. Thomann.
\newblock A stability result for Klein-Gordon for large periodic solutions near a homoclinic orbit.
\newblock {\em Preprint.}

\bibitem{GT}
B.~Gr{\'e}bert and L. Thomann.
\newblock  KAM for the Quantum Harmonic Oscillator.
\newblock{\em Comm. Math. Phys.} 307 (2011), 383--427.

\bibitem{GS1}
M. Groves and G. Schneider.
\newblock Modulating pulse solutions for a class of nonlinear wave equations. 
\newblock{\em   Comm. Math. Phys.} 219 (2001), no. 3, 489--522. 

\bibitem{GS2}
M. Groves and G. Schneider.
\newblock Modulating pulse solutions for quasilinear wave equations. 
\newblock{\em J. Differential Equations} 219 (2005), no. 1, 221--258. 

\bibitem{GS3}
M. Groves and G. Schneider.
\newblock Modulating pulse solutions to quadratic quasilinear wave equations over exponentially long length scales. 
\newblock{\em  Comm. Math. Phys.} 278 (2008), no. 3, 567--625. 
 
\bibitem{Helffer}
    B. Helffer.
\newblock Spectral theory and its applications.
\newblock      {\it Cambridge Studies in Advanced Mathematics}, {139},
 {Cambridge University Press}, 2013.



  \bibitem{HS}
P.-F. Hsieh and Y. Sibuya.
\newblock  Basic theory of ordinary differential equations. 
\newblock Universitext. Springer-Verlag, New York, 1999.

	\bibitem{IoossPeroueme}
G. Iooss and M.-C. ~P{\'e}rou{\`e}me.
\newblock Perturbed homoclinic solutions in 1:1 resonance vector fields.
\newblock {\em J. Differential Equations} 102(1), 1993.

	\bibitem{TheseTiphaine}
T.~J{\'e}z{\'e}quel, P. Bernard and E. Lombardi.
\newblock Homoclinic orbits with many loops near a $0^2i\omega$ resonant fixed point of Hamiltonian systems.
\newblock {\em To appear.}




	\bibitem{JolyRaugel}
R. Joly and G. Raugel.
\newblock A striking correspondence between the dynamics generated by the vector fields and by the scalar parabolic equations.
\newblock {\em Confluentes Mathematici} n.3 (2011), p. 471--493.

  \bibitem{LombardiArticle}
E. Lombardi.
\newblock Orbits homoclinic to exponentially small periodic orbits for a class of reversible systems. Application to water waves.
\newblock {\em Arch. Rationnal Mech. Anal.} 137 (1997), p. 227--304. 

\bibitem{LombardiLivre}
E. Lombardi.
\newblock Oscillatory integrals and phenomena beyond all algebraic orders.
\newblock Lecture Notes in Mathematics Vol.1741 (2000). Springer.

 \bibitem{Miel}
A. Mielke. 
\newblock  Hamiltonian and Lagrangian Flows on Centre Manifolds.
\newblock{\em Lecture Notes in Mathematics.} 1489 (1991) Springer.

 \bibitem{Miel2}
A. Mielke. 
\newblock A reduction principle for nonautonomous systems in infinite-dimensional spaces.
\newblock{\em J. Differential Equations}  65 (1986), no. 1, 68--88.

\bibitem{NS}
K. Nakanishi and W. Schlag. 
\newblock  Invariant Manifolds and Dispersive Hamiltonian Evolution Equations.
\newblock{\em Z\"urich Lectures in Advanced Mathematics.}  (2010) EMS.

  \bibitem{Perron}
O.~Perron.
\newblock  \"Uber ein vermeintliches Stabilit\"atskriterium.
\newblock{\em G\"ott. Nachr.} (1930), 128--129.

 \bibitem{Posch}
J. P\"oschel.
\newblock Quasi-periodic solutions for a nonlinear wave equation. 
\newblock{\em Comment. Math. Helv.}  71 (1996), no.~2, 269--296.
	  
    \bibitem{ShaZeng}
J.  Shatah and C. Zeng.
\newblock Orbits homoclinic to centre manifolds of conservative PDEs.
\newblock{\em Nonlinearity.} 16 (2003), no. 2, 591--614. 
 
     \bibitem{Wayne}
 C. E. Wayne.
 \newblock Periodic and quasi-periodic solutions of nonlinear wave equations via KAM theory.
 \newblock{\em   Comm. Math. Phys. } 127 (1990), no. 3, 479--528.
 \end{thebibliography}
\end{document}